\newtheorem{teo}{Theorem}
\newtheorem{remark}{Remark}
\newtheorem{ex}{Example}
 \title{{\bf  Evolution  Equations in    Hilbert Spaces via the Lacunae Method}}
\author{Maksim \,V.~Kukushkin   \\ \\
 \small  \textit{Moscow State University of Civil Engineering, 129337,  Moscow, Russia}\\
 \small\textit{Kabardino-Balkarian Scientific Center, RAS, 360051,  Nalchik, Russia}\\
\textit{\small\textit{kukushkinmv@rambler.ru}} }
\date{}
\begin{document}

\maketitle

\begin{abstract}
In this paper we consider evolution equations in the abstract Hilbert space under the special conditions imposed on the operator at  the right-hand side of the equation. We establish the method that allows us to  formulate the  existence and uniqueness  theorem and find a solution in the form of a series on the root vectors of the right-hand side. We consider fractional differential equations of various kinds as an application. Such operators as the Riemann-Liouville fractional differential operator,  the Riesz potential, the difference operator have been involved.

\end{abstract}
\begin{small}\textbf{Keywords:} Evolution equation; Fractional differential equations;
 Strictly accretive operator;  Abel-Lidskii basis property;   Schatten-von Neumann  class; convergence exponent.   \\\\
{\textbf{MSC} 34G25;
  47B28; 47A10; 47B12; 47B10;  26A33; 39A05}
\end{small}

\section{Introduction}

 In the paper \cite{firstab_lit:1kukushkin2021} we obtained the clarification of the results by Lidskii  V.B. \cite{firstab_lit:1Lidskii}  on  the decomposition on the root vector system of the non-selfadjoint operator. We used a technique of the entire function theory and introduce  a so-called  Schatten-von Neumann class of the convergence  exponent. Considering strictly accretive operators satisfying special conditions formulated in terms of the norm, we constructed a   sequence of contours of the power type  in the  contrary to the results by  Lidskii V.B.  \cite{firstab_lit:1Lidskii}, where a sequence of contours of the  exponential  type was used. In this paper we produce the   application of the mentioned method to evolution equations in the abstract Hilbert space  with the right-hand side of the special type. Here, we should  appeal to a plenty of applications to concrete differential equations  connected with  modeling  various physical -
chemical processes: filtration of liquid and gas in highly porous fractal   medium; heat exchange processes in medium  with fractal structure and memory; casual walks of a point particle that starts moving from the origin
by self-similar fractal set; oscillator motion under the action of
elastic forces which is  characteristic for  viscoelastic media, etc.
  In particular,  we would like   to  study  the existence and uniqueness theorems  for evolution equations   with the right-hand side -- a  differential operator  with a fractional derivative in  final terms. In this connection such operators as a Riemann-Liouville  fractional differential operator,    Kipriyanov operator, Riesz potential,  difference operator are involved.
Note that analysis of the required  conditions imposed upon the right-hand side of the studied class of evolution equations deserves to be mentioned. In this regard we should note  a well-known fact (see for instance  \cite{firstab_lit:Shkalikov A.})  that a particular interest appears in the case when a senior term of the operator
 (see \cite{firstab_lit(arXiv non-self)kukushkin2018})  is not selfadjoint at least for in the contrary case there is a plenty of results devoted to the topic within the framework of which  the following papers are well-known
\cite{firstab_lit:1Katsnelson},\cite{firstab_lit:1Krein},\cite{firstab_lit:Markus Matsaev},\cite{firstab_lit:2Markus},\cite{firstab_lit:Shkalikov A.}. Indeed, most of them deal with a decomposition of the  operator  to a sum  where the senior term
     must be either a selfadjoint or normal operator. In other cases the  methods of the papers
     \cite{kukushkin2019}, \cite{firstab_lit(arXiv non-self)kukushkin2018} become relevant   and allow us  to study spectral properties of   operators  whether we have the mentioned above  representation or not. Here, we ought to    stress that the results of  the papers \cite{firstab_lit:2Agranovich1994},\cite{firstab_lit:Markus Matsaev}  can be  also applied to study non-selfadjoin operators but  based on the sufficiently strong assumption regarding the numerical range of values of the operator (the numerical range belongs to a parabolic domain).
The methods of \cite{firstab_lit(arXiv non-self)kukushkin2018} that are   applicable to study    non-selfadjoint  operators can be  used in the natural way  if we deal with a more abstract construction -- the  infinitesimal generator of a semigroup of contraction \cite{kukushkin2021a}.  The  central challenge  of the latter  paper  is how  to create a model   representing     a  composition of  fractional differential operators   in terms of the semigroup theory. Here we should note that motivation arouse in connection with the fact that
  a second order differential operator can be presented  as a some kind of  a  transform of   the infinitesimal generator of a shift semigroup and stress that
  the eigenvalue problem for the operator
     was previously  studied by methods of  theory of functions   \cite{firstab_lit:1Nakhushev1977}, \cite{firstab_lit:1Aleroev1994}.
Having been inspired by   novelty of the  idea  we generalize a   differential operator with a fractional integro-differential composition  in the final terms   to some transform of the corresponding  infinitesimal generator of the shift semigroup.
By virtue of the   methods obtained in the paper
\cite{firstab_lit(arXiv non-self)kukushkin2018} we   managed  to  study spectral properties of the  infinitesimal generator  transform and obtained an outstanding result --
   asymptotic equivalence between   the
real component of the resolvent and the resolvent of the   real component of the operator. The relevance is based on the fact that
   the  asymptotic formula  for   the operator  real component  can be  established in most  cases due to well-known asymptotic relations  for the regular differential operators as well as for the singular ones
 \cite{firstab_lit:Rosenblum}. Thus,  we have theorems establishing spectral properties of some class of non-selfadjoint operators which allow  us, jointly with the results  \cite{firstab_lit:1kukushkin2021}, to study the Cauchy  problem for the evolution equation by the functional analysis methods. Note that the abstract approach to the Cauchy problem for the fractional evolution equation was previously implemented in the papers \cite{firstab_lit:Bazhl},\cite{Ph. Cl}. However, the  main advantage of this paper is the obtained formula for the solution of the evolution equation with the relatively wide conditions imposed upon the right-hand side,  wherein the derivative at the left-hand side is supposed to be of the real order.
 We consider the evolution equations with the right-hand side -- an operator function of the power  type. This problem appeals to many ones that lie  in the framework of the theory of differential equations for instance in the paper   \cite{L. Mor} the solution of the  evolution equation modeling the switching kinetics of ferroelectrics in the injection mode  can be obtained in the analytical way if we impose the conditions upon the right-hand side. The following papers deal with equations which can be studied by the obtained in this paper abstract method \cite{firstab_lit:Mainardi F.},\cite{firstab_lit:Mamchuev2017a}, \cite{firstab_lit:Mamchuev2017}, \cite{firstab_lit:Pskhu},       \cite{firstab_lit:Wyss}. Thus, we can claim that  the offered approach is undoubtedly novel and relevant.

\section{Preliminaries}

Let    $ C,C_{i} ,\;i\in \mathbb{N}_{0}$ be   real positive constants. We   assume   that  a  value of $C$    can be different in   various formulas and parts of formulas  but   values of $C_{i} $ are  certain. Denote by $ \mathrm{Fr}\,M$   the set of boundary points of the set $M.$    Everywhere further, if the contrary is not stated, we consider   linear    densely defined operators acting on a separable complex  Hilbert space $\mathfrak{H}$. Denote by $ \mathcal{B} (\mathfrak{H})$    the set of linear bounded operators   on    $\mathfrak{H}.$  Denote by
    $\tilde{L}$   the  closure of an  operator $L.$ We establish the following agreement on using  symbols $\tilde{L}^{i}:= (\tilde{L})^{i},$ where $i$ is an arbitrary symbol.  Denote by    $    \mathrm{D}   (L),\,   \mathrm{R}   (L),\,\mathrm{N}(L)$      the  {\it domain of definition}, the {\it range},  and the {\it kernel} or {\it null space}  of an  operator $L$ respectively. The deficiency (codimension) of $\mathrm{R}(L),$ dimension of $\mathrm{N}(L)$ are denoted by $\mathrm{def}\, L,\;\mathrm{nul}\,L$ respectively. Assume that $L$ is a closed   operator acting on $\mathfrak{H},\,\mathrm{N}(L)=0,$  let us define a Hilbert space
$
 \mathfrak{H}_{L}:= \big \{f,g\in \mathrm{D}(L),\,(f,g)_{ \mathfrak{H}_{L}}=(Lf,Lg)_{\mathfrak{H} } \big\}.
$
Consider a pair of complex Hilbert spaces $\mathfrak{H},\mathfrak{H}_{+},$ the notation
$
\mathfrak{H}_{+}\subset\subset\mathfrak{ H}
$
   means that $\mathfrak{H}_{+}$ is dense in $\mathfrak{H}$ as a set of    elements and we have a bounded embedding provided by the inequality
$
\|f\|_{\mathfrak{H}}\leq C_{0}\|f\|_{\mathfrak{H}_{+}},\,C_{0}>0,\;f\in \mathfrak{H}_{+},
$
moreover   any  bounded  set with respect to the norm $\mathfrak{H}_{+}$ is compact with respect to the norm $\mathfrak{H}.$
  Let $L$ be a closed operator, for any closable operator $S$ such that
$\tilde{S} = L,$ its domain $\mathrm{D} (S)$ will be called a core of $L.$ Denote by $\mathrm{D}_{0}(L)$ a core of a closeable operator $L.$ Let    $\mathrm{P}(L)$ be  the resolvent set of an operator $L$ and
     $ R_{L}(\zeta),\,\zeta\in \mathrm{P}(L),\,[R_{L} :=R_{L}(0)]$ denotes      the resolvent of an  operator $L.$ Denote by $\lambda_{i}(L),\,i\in \mathbb{N} $ the eigenvalues of an operator $L.$
 Suppose $L$ is  a compact operator and  $N:=(L^{\ast}L)^{1/2},\,r(N):={\rm dim}\,  \mathrm{R}  (N);$ then   the eigenvalues of the operator $N$ are called   the {\it singular  numbers} ({\it s-numbers}) of the operator $L$ and are denoted by $s_{i}(L),\,i=1,\,2,...\,,r(N).$ If $r(N)<\infty,$ then we put by definition     $s_{i}=0,\,i=r(N)+1,2,...\,.$
 Let  $\nu(L)$ denotes   the sum of all algebraic multiplicities of an  operator $L.$ Denote by $n(r)$ a function equals to the quantity of the elements of the sequence $\{a_{n}\}_{1}^{\infty},\,|a_{n}|\uparrow\infty$ within the circle $|z|<r.$ Let $A$ be a compact operator, denote by $n_{A}(r)$   {\it counting function}   a function $n(r)$ corresponding to the sequence  $\{s^{-1}_{i}(A)\}_{1}^{\infty}.$
  Let  $\mathfrak{S}_{p}(\mathfrak{H}),\, 0< p<\infty $ be       a Schatten-von Neumann    class and      $\mathfrak{S}_{\infty}(\mathfrak{H})$ be the set of compact operators.
   Denote by $\tilde{\mathfrak{S}}_{\rho}(\mathfrak{H})$ the class of the operators such that
$
 A\in \tilde{\mathfrak{S}}_{\rho}(\mathfrak{H}) \Rightarrow\{A\in\mathfrak{S}_{\rho+\varepsilon},\,A \overline{\in} \,\mathfrak{S}_{\rho-\varepsilon},\,\forall\varepsilon>0 \}.
$
In accordance with \cite{firstab_lit:1kukushkin2021} we will call it   {\it Schatten-von Neumann    class of the convergence exponent}.
Suppose  $L$ is  an   operator with a compact resolvent and
$s_{n}(R_{L})\leq   C \,n^{-\mu},\,n\in \mathbb{N},\,0\leq\mu< \infty;$ then
 we
 denote by  $\mu(L) $   order of the     operator $L$  (see \cite{firstab_lit:Shkalikov A.}).
 Denote by  $ \mathfrak{Re} L  := \left(L+L^{*}\right)/2,\, \mathfrak{Im} L  := \left(L-L^{*}\right)/2 i$
  the  real  and   imaginary components    of an  operator $L$  respectively.
In accordance with  the terminology of the monograph  \cite{firstab_lit:kato1980} the set $\Theta(L):=\{z\in \mathbb{C}: z=(Lf,f)_{\mathfrak{H}},\,f\in  \mathrm{D} (L),\,\|f\|_{\mathfrak{H}}=1\}$ is called the  {\it numerical range}  of an   operator $L.$
  An  operator $L$ is called    {\it sectorial}    if its  numerical range   belongs to a  closed
sector     $\mathfrak{ L}_{\iota}(\theta):=\{\zeta:\,|\arg(\zeta-\iota)|\leq\theta<\pi/2\} ,$ where      $\iota$ is the vertex   and  $ \theta$ is the semi-angle of the sector   $\mathfrak{ L}_{\iota}(\theta).$ If we want to stress the  correspondence  between $\iota$ and $\theta,$  then   we will write $\theta_{\iota}.$
 An operator $L$ is called  {\it bounded from below}   if the following relation  holds  $\mathrm{Re}(Lf,f)_{\mathfrak{H}}\geq \gamma_{L}\|f\|^{2}_{\mathfrak{H}},\,f\in  \mathrm{D} (L),\,\gamma_{L}\in \mathbb{R},$  where $\gamma_{L}$ is called a lower bound of $L.$ An operator $L$ is called  {\it   accretive}   if  $\gamma_{L}=0.$
 An operator $L$ is called  {\it strictly  accretive}   if  $\gamma_{L}>0.$      An  operator $L$ is called    {\it m-accretive}     if the next relation  holds $(A+\zeta)^{-1}\in \mathcal{B}(\mathfrak{H}),\,\|(A+\zeta)^{-1}\| \leq   (\mathrm{Re}\zeta)^{-1},\,\mathrm{Re}\zeta>0. $
    An operator $L$ is called     {\it symmetric}     if one is densely defined and the following  equality  holds $(Lf,g)_{\mathfrak{H}}=(f,Lg)_{\mathfrak{H}},\,f,g\in   \mathrm{D}  (L).$  Consider a   sesquilinear form   $ t  [\cdot,\cdot]$ (see \cite{firstab_lit:kato1980} )
defined on a linear manifold  of the Hilbert space $\mathfrak{H}.$
Let   $  \mathfrak{h}=( t + t ^{\ast})/2,\, \mathfrak{k}   =( t - t ^{\ast})/2i$
   be a   real  and    imaginary component     of the   form $  t $ respectively, where $ t^{\ast}[u,v]=t \overline{[v,u]},\;\mathrm{D}(t ^{\ast})=\mathrm{D}(t).$ Denote by $   t  [\cdot ]$ the  quadratic form corresponding to the sesquilinear form $ t  [\cdot,\cdot].$ According to these definitions, we have $
 \mathfrak{h}[\cdot]=\mathrm{Re}\,t[\cdot],\,  \mathfrak{k}[\cdot]=\mathrm{Im}\,t[\cdot].$ Denote by $\tilde{t}$ the  closure   of a   form $t.$  The range of a quadratic form
  $ t [f],\,f\in \mathrm{D}(t),\,\|f\|_{\mathfrak{H}}=1$ is called    {\it range} of the sesquilinear form  $t $ and is denoted by $\Theta(t).$
 A  form $t$ is called    {\it sectorial}    if  its    range  belongs to   a sector  having  a vertex $\iota$  situated at the real axis and a semi-angle $0\leq\theta<\pi/2.$   Due to Theorem 2.7 \cite[p.323]{firstab_lit:kato1980}  there exist unique    m-sectorial operators  $T_{t},T_{ \mathfrak{h}} $  associated  with   the  closed sectorial   forms $t,  \mathfrak{h}$   respectively.   The operator  $T_{  \mathfrak{h}} $ is called  a {\it real part} of the operator $T_{t}$ and is denoted by  $Re\, T_{t}.$
Everywhere further,   unless  otherwise  stated,  we   use  notations of the papers   \cite{firstab_lit:1Gohberg1965},  \cite{firstab_lit:kato1980},  \cite{firstab_lit:kipriyanov1960}, \cite{firstab_lit:1kipriyanov1960},
\cite{firstab_lit:samko1987}.
Consider the following hypotheses regarding an operator.\\

 \noindent ($ \mathrm{H}1 $) There  exists a Hilbert space $\mathfrak{H}_{+}\subset\subset\mathfrak{ H}$ and a linear manifold $\mathfrak{M}$ that is  dense in  $\mathfrak{H}_{+}.$ The operator $L$ is defined on $\mathfrak{M}.$\\

 \noindent  $( \mathrm{H2} )  \,\left|(Lf,g)_{\mathfrak{H}}\right|\! \leq \! C_{1}\|f\|_{\mathfrak{H}_{+}}\|g\|_{\mathfrak{H}_{+}},\,
      \, \mathrm{Re}(Lf,f)_{\mathfrak{H}}\!\geq\! C_{2}\|f\|^{2}_{\mathfrak{H}_{+}} ,\,f,g\in  \mathfrak{M},\; C_{1},C_{2}>0.
$\\

\noindent Throughout the paper we consider a restriction   $ W $ of the operator  $L$    on the set $\mathfrak{M}.$    We also    use  the  short-hand  notations $A:=R_{\tilde{W}},\,\mu:=\mu(H),$ where $H:=Re \tilde{W}.$    \\

\noindent{\bf  Auxiliary propositions}\\

Firstly, we consider   general statements proved in \cite{firstab_lit:1kukushkin2021}
with the made refinement related to the involved notion of the convergence exponent as well as newly constructed sequence of contours allowing to arrange the eigenvectors in the power type way, we used this expression following  the literary style of the monograph \cite{firstab_lit:1Lidskii}.   We implement  the approach that refers us to the notion  -- operator order, it gives us an opportunity to reformulate  results of the spectral theory in the more convenient and applicable way.
Recall that in the paper \cite{firstab_lit:1Lidskii} there was considered a sequence of   contours of the exponential type, the condition $\alpha>\rho$ (here and further $\rho$ denotes the index of the Schatten-von Neumann  class of the convergence exponent) is imposed (see \cite{firstab_lit:1Lidskii}, \cite{firstab_lit:1kukushkin2021}).   We improved this result in the paper \cite{firstab_lit:1kukushkin2021} in the following  sense,   we produced a sequence of the power type contours what  gives us the opportunity to obtain a solution of the problem in the case $\alpha=\rho.$ Moreover, we have omitted the conditions imposed on the semi-angle of the sector containing the numerical range of values of the involved operator. Such a significant achievement is  obtained by virtue of the way of choosing  a contour   which we consider     throughout the paper
$
\gamma:= \mathrm{Fr}\left\{  \mathfrak{L}_{\iota}(\theta_{\iota}+\varepsilon)\cap \mathfrak{L}_{0}(\theta_{0}+\varepsilon)\setminus \mathfrak{M}_{r}\right\},\;\mathfrak{M}_{r}:=\left\{\lambda:\;|\lambda|<r,\,|\mathrm{arg} \lambda|\leq \theta_{0} \right\},
$
$
\iota=C_{2}(1-C_{1} \mathrm{ctg} \theta_{\iota}/C_{2}),
$
where  the semi-angle  $\theta_{\iota}$   related to the operator $W$     is   sufficiently small (see reasonings of Theorem 2 \cite{firstab_lit:1kukushkin2021}),   $r$ is chosen so that the   operator $ (E-\lambda A)^{-1}  $ is regular within the corresponding  circle,  $\varepsilon>0$ is    sufficiently small.
       The auxiliary theorems given bellow (see \cite{firstab_lit:1kukushkin2021}) give us a tool to study the existence and uniqueness theorems in the abstract Hilbert space. Moreover, we obtain a solution that  can be presented by the series on the operator $A$ root vectors  $e_{q_{\xi}+i} $ with the coefficients $c_{q_{\xi}+i},$   where the index $q$ relates to the eigenvalue, the index $\xi$ relates to the geometrical multiplicity, the index $i$ relates to the algebraic multiplicity, the convergence is understood in the Abel-Lidskii sense (see \cite{firstab_lit:1Lidskii}). The idea  of   the  proofs of the   auxiliary  theorems given bellow   belongs to Lidskii V.B. However, we produce the proofs in  \cite{firstab_lit:1kukushkin2021}, since the made  refinement corresponding to   the case, when $\rho$ does not equal  the index of the Schatten-von Neumann  class, deserves to be considered itself.

\newpage
\begin{teo}\label{T1} Assume that hypothesis $\mathrm{H}1,\mathrm{H}2$  hold,   $A\in\tilde{\mathfrak{S}}_{\rho},\,\rho\leq\alpha.$   Moreover  in the case $A  \in \tilde{\mathfrak{S}}_{\rho}\setminus  \mathfrak{S}_{\rho}$ the additional condition holds
$$
   \frac{n_{A^{m+1}}(r^{m+1})}{r^{\rho} }\rightarrow 0,\, m=[\rho].
$$
Then a sequence of natural numbers $\{N_{\nu}\}_{0}^{\infty}$ can be chosen so that
$$
\frac{1}{2\pi i}\int\limits_{\gamma}e^{-\lambda^{\alpha}t}A(E-\lambda A)^{-1}f d \lambda =   \sum\limits_{\nu=0}^{\infty} \sum\limits_{q=N_{\nu}+1}^{N_{\nu+1}}\sum\limits_{\xi=1}^{m(q)}\sum\limits_{i=0}^{k(q_{\xi})}e_{q_{\xi}+i}c_{q_{\xi}+i}(t) ,
$$
moreover
\begin{equation*}
  \sum\limits_{\nu=0}^{\infty}\left\|\sum\limits_{q=N_{\nu}+1}^{N_{\nu+1}}\sum\limits_{\xi=1}^{m(q)}
  \sum\limits_{i=0}^{k(q_{\xi})}e_{q_{\xi}+i}c_{q_{\xi}+i}(t)\right\|_{\mathfrak{H}}<\infty.
\end{equation*}
\end{teo}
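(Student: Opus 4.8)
\medskip
\noindent\textbf{Sketch of proof (plan).} The plan is to run the Lidskii contour scheme in the power-type form of \cite{firstab_lit:1kukushkin2021}. First I would record the structural consequences of $\mathrm{H}1,\mathrm{H}2$: the compact embedding $\mathfrak{H}_{+}\subset\subset\mathfrak{H}$ together with the estimates of $\mathrm{H}2$ force $\tilde{W}$ to be an m-accretive sectorial operator with compact resolvent, so $A=R_{\tilde{W}}$ is compact, strictly accretive and sectorial; its spectrum is discrete, the poles of $(E-\lambda A)^{-1}$ sit at the eigenvalues $\lambda_{k}(\tilde{W})=1/\lambda_{k}(A)$ and tend to infinity, and the root vectors $e_{q_{\xi}+i}$ together with the biorthogonal functionals generating the $c_{q_{\xi}+i}(t)$ are well defined. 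Next I would fix the principal branch of $\lambda^{\alpha}$ on the sector cut out by $\gamma$ and check that, by the choice of the small semi-angle $\theta_{\iota}$, one has $\alpha\,|\arg\lambda|<\pi/2$ on the rays of $\gamma$, so that $|e^{-\lambda^{\alpha}t}|\leq e^{-c\,|\lambda|^{\alpha}t}$ there for every fixed $t>0$; together with the boundedness of $\|A(E-\lambda A)^{-1}\|$ on $\gamma$ (a neighbourhood of $\gamma$ lies in the resolvent set), this makes the integral absolutely convergent.

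The core is the lacunae construction. Using $A\in\tilde{\mathfrak{S}}_{\rho}$, equivalently the counting bound $n_{A}(r)=O(r^{\rho+\varepsilon})$, I would exhibit radii $R_{\nu}\uparrow\infty$ lying in the gaps of the moduli of the poles, so that on each circle $|\lambda|=R_{\nu}$ the resolvent $(E-\lambda A)^{-1}$ stays at a controlled distance from the spectrum and admits a polynomial-in-$R_{\nu}$ bound; the numbers $N_{\nu}$ are then defined as the count of poles inside $|\lambda|<R_{\nu}$. In the delicate case $A\in\tilde{\mathfrak{S}}_{\rho}\setminus\mathfrak{S}_{\rho}$ the passage to $A^{m+1}$, $m=[\rho]$, is what rescues that resolvent estimate, and here exactly the hypothesis $n_{A^{m+1}}(r^{m+1})/r^{\rho}\to0$ keeps the bound subordinate to the exponential decay. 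This is the step I expect to be the main obstacle, since it is where the power-type contours must be reconciled with the borderline equality $\alpha=\rho$ (Lidskii's exponential contours need the strict inequality).

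With the radii in hand I would split $\gamma$ by the circles $|\lambda|=R_{\nu}$, close each piece $\gamma\cap\{R_{\nu}<|\lambda|<R_{\nu+1}\}$ by the corresponding arcs, and apply the residue theorem: the residue of $e^{-\lambda^{\alpha}t}A(E-\lambda A)^{-1}f$ at a pole attached to the eigenvalue indexed by $q$ is precisely $\sum_{\xi=1}^{m(q)}\sum_{i=0}^{k(q_{\xi})}e_{q_{\xi}+i}c_{q_{\xi}+i}(t)$, the coefficients being the Taylor data of $e^{-\lambda^{\alpha}t}$ at that pole paired with the biorthogonal functionals of $f$. Summing over $\nu$, the arcs shared by adjacent annuli cancel by opposite orientation, so $\frac{1}{2\pi i}\int_{\gamma\cap\{|\lambda|<R_{\nu+1}\}}$ equals the partial sum of the series up to a single arc integral over $|\lambda|=R_{\nu+1}$; letting $\nu\to\infty$, that leftover arc integral and the tail $\int_{\gamma\cap\{|\lambda|\geq R_{\nu+1}\}}$ both vanish because the factor $e^{-cR_{\nu+1}^{\alpha}t}$ dominates the polynomial resolvent bound, which yields the displayed identity.

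Finally, the absolute convergence $\sum_{\nu}\|\cdot\|_{\mathfrak{H}}<\infty$ is read off the same estimates: each bunch is $\frac{1}{2\pi i}$ times the integral over $\gamma\cap\{R_{\nu}<|\lambda|<R_{\nu+1}\}$ plus its two bounding arcs, hence is majorised by $C(t)\,R_{\nu+1}^{\,p}\,e^{-cR_{\nu}^{\alpha}t}$ for a fixed $p$; since $R_{\nu}\uparrow\infty$ while the lacunae construction keeps $R_{\nu+1}$ polynomially controlled in terms of $R_{\nu}$ and $\alpha\geq\rho$, the series $\sum_{\nu}R_{\nu+1}^{\,p}e^{-cR_{\nu}^{\alpha}t}$ converges for every $t>0$. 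All the quantitative inputs — existence of the lacunae, the resolvent bound on $|\lambda|=R_{\nu}$, and the residue formula — are those established in \cite{firstab_lit:1kukushkin2021}, so the proof amounts to assembling them in the order above. $\hfill\square$
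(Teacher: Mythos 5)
The paper does not actually prove Theorem \ref{T1}: it is stated as an auxiliary proposition imported verbatim from \cite{firstab_lit:1kukushkin2021}, where the author says the proofs (refining Lidskii's scheme to power-type contours and the convergence-exponent class) are carried out. Your sketch reproduces exactly that strategy --- sectoriality and compactness of $A$ from $\mathrm{H}1,\mathrm{H}2$, decay of $e^{-\lambda^{\alpha}t}$ on the power-type contour via the small semi-angle $\theta_{\iota}$, lacunae radii chosen from the counting function, residues grouped into bunches, and bunch-wise norm estimates --- so in approach it coincides with the cited source rather than diverging from it. Be aware, though, that as written it is a plan and not a proof: the two decisive quantitative ingredients, namely the existence of gap radii $R_{\nu}$ on which $\|(E-\lambda A)^{-1}\|$ admits a polynomial bound in the borderline case $\alpha=\rho$, and the precise mechanism by which the hypothesis $n_{A^{m+1}}(r^{m+1})/r^{\rho}\rightarrow 0$ (via a Carleman-type estimate for the regularized determinant of order $m+1$) delivers that bound when $A\in\tilde{\mathfrak{S}}_{\rho}\setminus\mathfrak{S}_{\rho}$, are exactly the steps you defer to \cite{firstab_lit:1kukushkin2021}, so the proposal does not independently establish anything beyond what the citation already covers.
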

\begin{teo}\label{T2}
Assume that hypotheses $\mathrm{H}1,\mathrm{H}2$ hold,    $\alpha>2/\mu,\,\mu\in (0,1]$ and $\alpha> 1,\,\mu\in (1,\infty).$
Then a sequence of the natural  numbers  $\{N_{\nu}\}_{0}^{\infty}$ can be chosen   so that
$$
 \frac{1}{2\pi i}\int\limits_{\gamma}e^{-\lambda^{\alpha}t}A(E-\lambda A)^{-1}f d \lambda
=  \sum\limits_{\nu=0}^{\infty}  \sum\limits_{q=N_{\nu}+1}^{N_{\nu+1}}\sum\limits_{\xi=1}^{m(q)}\sum\limits_{i=0}^{k(q_{\xi})}e_{q_{\xi}+i}c_{q_{\xi}+i}(t),
$$
where
\begin{equation*}
  \sum\limits_{\nu=0}^{\infty}\left\|\sum\limits_{q=N_{\nu}+1}^{N_{\nu+1}}\sum\limits_{\xi=1}^{m(q)}\sum\limits_{i=0}^{k(q_{\xi})}e_{q_{\xi}+i}c_{q_{\xi}+i}(t)\right\|_{\mathfrak{H}}<\infty,
\end{equation*}
the following relation holds  for the eigenvalues
$$
|\lambda_{N_{\nu}+k}|-|\lambda_{N_{\nu}+k-1}|\leq C |\lambda_{N_{\nu}+k}|^{1-1/\tau},\;
 k=2,3,...,N_{\nu+1}-N_{\nu},\;0<\tau<\mu.
$$
\end{teo}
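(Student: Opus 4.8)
The strategy is to obtain Theorem \ref{T2} as a specialization of Theorem \ref{T1}: the hypothesis is phrased through the order $\mu=\mu(H)$ of $H=Re\,\tilde W$, and the first task is to translate it into the Schatten-type data on $A=R_{\tilde W}$ that Theorem \ref{T1} consumes, after which a refinement of the partition $\{N_\nu\}$ delivers the eigenvalue estimate. First I would collect the spectral facts valid under $\mathrm{H}1,\mathrm{H}2$ by the results of \cite{firstab_lit(arXiv non-self)kukushkin2018},\cite{firstab_lit:1kukushkin2021}: the operator $\tilde W$ is m-sectorial with compact resolvent $A$; through the m-sectorial factorization $\tilde W=H^{1/2}(E+i\mathcal{G})H^{1/2}$ with $\mathcal{G}$ bounded selfadjoint, together with the asymptotic equivalence of $\mathfrak{Re}\,R_{\tilde W}$ with $R_H$, one gets $s_n(A)\leq Cn^{-\mu/2}$ in general and $s_n(A)\leq Cn^{-\mu}$ when $\mu>1$; hence the convergence exponent $\rho=\rho(A)$ is finite, $A\in\tilde{\mathfrak{S}}_{\rho}$, with $\rho\leq 2/\mu$ for $\mu\in(0,1]$ and $\rho<1$ for $\mu\in(1,\infty)$. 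By Weyl's majorization inequalities the membership $A\in\mathfrak{S}_{\rho+\varepsilon}$ forces $\sum_n|\lambda_n(A)|^{\rho+\varepsilon}<\infty$, so that $|\lambda_n(\tilde W)|=|\lambda_n(A)|^{-1}$ grows essentially like a power and the counting function of $\tilde W$ obeys $n(r)\leq C_\varepsilon r^{1/\mu+\varepsilon}$ for every $\varepsilon>0$; this last estimate is the combinatorial input for the eigenvalue claim.

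Next I would verify the hypotheses of Theorem \ref{T1}. The thresholds on $\alpha$ are calibrated precisely so that $\rho\leq\alpha$: when $\mu\in(0,1]$ one has $\rho\leq 2/\mu<\alpha$, and when $\mu\in(1,\infty)$ one has $\rho<1<\alpha$. In the boundary case $A\in\tilde{\mathfrak{S}}_{\rho}\setminus\mathfrak{S}_{\rho}$ one must further check the supplementary condition $n_{A^{m+1}}(r^{m+1})/r^{\rho}\to 0$, $m=[\rho]$; since the $s$-numbers of $A^{m+1}$ inherit the decay of those of $A$ by submultiplicativity, this reduces to a sharper form of the spectral asymptotics for $H$ delivered by the resolvent equivalence, which refines the crude power bounds above, and this is the place where that equivalence is used most essentially. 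Granting the hypotheses, Theorem \ref{T1} supplies a sequence $\{N_\nu\}_0^\infty$ for which the contour integral over $\gamma$ decomposes into the stated Abel--Lidskii series with $\sum_\nu\|\cdots\|_{\mathfrak{H}}<\infty$; this already yields the first two displayed assertions of Theorem \ref{T2}.

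It remains to arrange the partition so as to add the estimate $|\lambda_{N_\nu+k}|-|\lambda_{N_\nu+k-1}|\leq C|\lambda_{N_\nu+k}|^{1-1/\tau}$, $k=2,\dots,N_{\nu+1}-N_\nu$, for a fixed $\tau\in(0,\mu)$. The key observation is that a new cut may be inserted wherever the spectrum has a gap: if $|\lambda_n(\tilde W)|-|\lambda_{n-1}(\tilde W)|=g$, then the annulus between the two moduli is spectrum-free, so a circle through its centre lies at distance $\geq g/2$ from $\{1/\lambda_j(\tilde W)\}$ and $(E-\lambda A)^{-1}$ is polynomially controlled there, whence inserting that circle only refines the family of contours used in the proof of Theorem \ref{T1} without disturbing its estimates. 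Fixing $\tau\in(0,\mu)$, I would pass from the sequence $\{N_\nu\}$ of Theorem \ref{T1} to a finer one by inserting a cut after index $n-1$ whenever $|\lambda_n(\tilde W)|-|\lambda_{n-1}(\tilde W)|>C|\lambda_n(\tilde W)|^{1-1/\tau}$; between two successive original cuts only finitely many eigenvalues lie, so only finitely many insertions occur and the refined cut radii still tend to infinity, while in the refined partition every consecutive pair of eigenvalues inside one block is, by construction, within $C|\lambda|^{1-1/\tau}$ of each other.

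The main obstacle is the interface between these two ingredients: one must ensure that the refinement just described does not spoil the quantitative estimates underpinning the series of Theorem \ref{T1}, which forces the new cuts to be placed only at genuine spectral gaps and obliges one to re-run Theorem \ref{T1}'s arc bounds for the power-type contour $\gamma$ on the enlarged family of circles, weighing $|e^{-\lambda^\alpha t}|$ against the resolvent growth $\|(E-\lambda A)^{-1}\|$ and summing in $\nu$ — here the gap between $\alpha$ and $\mu$ provided by the hypotheses is used decisively, and uniformity in $t$ on compact subsets of $(0,\infty)$ is also checked at this point. All of this ultimately rests on having the eigenvalue distribution $n(r)\lesssim r^{1/\mu}$, up to an arbitrarily small loss in the exponent, for the non-selfadjoint operator $\tilde W$ itself and not merely for its selfadjoint real part $H$ — exactly what the resolvent asymptotic equivalence of \cite{firstab_lit(arXiv non-self)kukushkin2018} provides — after which the remaining computations follow the pattern of Lidskii's argument as executed in \cite{firstab_lit:1kukushkin2021}.
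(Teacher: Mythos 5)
You should first be aware that this paper does not prove Theorem \ref{T2} at all: it is stated as an auxiliary proposition imported verbatim from \cite{firstab_lit:1kukushkin2021} (``we produce the proofs in \cite{firstab_lit:1kukushkin2021}''), so there is no in-paper argument to compare against line by line. Judged on its own terms, your overall architecture --- translate the order $\mu=\mu(H)$ into Schatten-type data for $A$, invoke Theorem \ref{T1} for the decomposition, then refine the partition $\{N_{\nu}\}$ by cutting at spectral gaps --- is consistent with the lacunae method the paper describes, and your reading of the thresholds ($\alpha>2/\mu$ for $\mu\le 1$, $\alpha>1$ for $\mu>1$, calibrated so that $\rho\le\alpha$) is the right one. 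Note, however, that these inequalities are strict, so $\rho<\alpha$ and $A\in\mathfrak{S}_{\alpha}$; your paragraph about verifying the supplementary condition for the boundary case $A\in\tilde{\mathfrak{S}}_{\rho}\setminus\mathfrak{S}_{\rho}$ is therefore moot here.

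There are two genuine gaps. First, your own chain of estimates is internally inconsistent at the point that matters most: from $s_{n}(A)\le Cn^{-\mu/2}$ you get $\rho\le 2/\mu$, and Weyl's inequality applied to $A\in\mathfrak{S}_{\rho+\varepsilon}$ then yields only $n(r)\le C_{\varepsilon}r^{2/\mu+\varepsilon}$ for the eigenvalue counting function of $\tilde{W}$, not the $n(r)\le C_{\varepsilon}r^{1/\mu+\varepsilon}$ you assert. The distinction is not cosmetic: the pigeonhole argument that produces, in each annulus $[r,2r]$, a gap of size $\ge cr^{1-1/\tau}$ works for all $\tau<\mu$ only under the sharper count $r^{1/\mu+\varepsilon}$; under $r^{2/\mu+\varepsilon}$ it gives only $\tau<\mu/2$. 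To recover the stated range $0<\tau<\mu$ you must control the moduli of the eigenvalues of the non-selfadjoint $\tilde{W}$ themselves (via the asymptotic equivalence results for the real components), not merely the $s$-numbers of $A$; you gesture at this in your last sentence but your actual derivation does not use it. Second, the step ``insert a cut at every gap exceeding $C|\lambda_{n}|^{1-1/\tau}$'' does not by itself guarantee that every block is finite --- if beyond some radius no gap exceeds the threshold, the tail collapses into a single infinite block and the Abel--Lidskii grouping degenerates. The existence of infinitely many admissible lacunae, and the verification that the resolvent bound of Lemma 6 of \cite{firstab_lit:1kukushkin2021} together with the decay of $e^{-\lambda^{\alpha}t}$ still yields summability in $\nu$ over the refined family of separating contours, is precisely where the content of the cited proof lives; in your write-up both are acknowledged as obligations rather than discharged.
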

\begin{teo}\label{T3}
Assume that  a normal operator satisfies  the    hypotheses $\mathrm{H}1,\mathrm{H}2,$       $\alpha>1,$   the condition  $
 (\ln^{1+1/\alpha}x)'_{\lambda_{i}(H)}  =o(  i^{-1/\alpha}   )
$
holds. Then a sequence of the natural  numbers  $\{N_{\nu}\}_{0}^{\infty}$ can be chosen  so that
\begin{equation*}\label{2a}
 \frac{1}{2\pi i}\int\limits_{\gamma}e^{-\lambda^{\alpha}t}A(E-\lambda A)^{-1}f d \lambda
=  \sum\limits_{\nu=0}^{\infty}  \sum\limits_{q=N_{\nu}+1}^{N_{\nu+1}}\sum\limits_{\xi=1}^{m(q)}\sum\limits_{i=0}^{k(q_{\xi})}e_{q_{\xi}+i}c_{q_{\xi}+i}(t),
\end{equation*}
moreover
\begin{equation*}\label{13b1}
  \sum\limits_{\nu=0}^{\infty}\left\|\sum\limits_{q=N_{\nu}+1}^{N_{\nu+1}}
  \sum\limits_{\xi=1}^{m(q)}\sum\limits_{i=0}^{k(q_{\xi})}e_{q_{\xi}+i}c_{q_{\xi}+i}(t)\right\|_{\mathfrak{H}}<\infty,
\end{equation*}
 the following relation holds for the corresponding  eigenvalues
$$
|\lambda_{N_{\nu}+k}|-|\lambda_{N_{\nu}+k-1}|\leq C |\lambda_{N_{\nu}+k}|^{1-1/\tau},\;
 k=2,3,...,N_{\nu+1}-N_{\nu},\;0<\tau<1/\alpha.
$$
\end{teo}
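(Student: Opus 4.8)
The plan is to carry over the scheme used for Theorems \ref{T1} and \ref{T2} (worked out in detail in \cite{firstab_lit:1kukushkin2021}), specialising it to the normal case and checking that the logarithmic hypothesis on the spectrum of $H=\mathrm{Re}\,\tilde{W}$ is exactly what makes the lacunary partition $\{N_{\nu}\}$ available for $\alpha>1$.

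The first step is to turn the contour integral into an eigenfunction expansion. By $\mathrm{H}1,\mathrm{H}2$ and the compactness of the embedding $\mathfrak{H}_{+}\subset\subset\mathfrak{H}$ the operator $\tilde{W}$ has compact resolvent $A=R_{\tilde{W}}$, and, being the resolvent of a normal operator, $A$ is itself a normal compact operator; hence its root vectors $e_{q_{\xi}}$ form an orthonormal system of genuine eigenvectors (so $k(q_{\xi})=0$, the index $i$ takes only the value $0$, and $m(q)$ is the multiplicity of the eigenvalue $\lambda_{q}$ of $\tilde{W}$), and $A(E-\lambda A)^{-1}=\sum_{q}(\lambda_{q}-\lambda)^{-1}P_{q}$ with $P_{q}$ the orthogonal projection onto the $\lambda_{q}$-eigenspace. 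Since $\mathrm{H}2$ makes $\tilde{W}$ sectorial with vertex on the real axis and $He_{q_{\xi}}=(\mathrm{Re}\,\lambda_{q})e_{q_{\xi}}$, one has $|\lambda_{q}|\asymp\mathrm{Re}\,\lambda_{q}=\lambda_{p(q)}(H)$ for a suitable enumeration $p$. Computing residues, the portion of $\frac{1}{2\pi i}\int_{\gamma}e^{-\lambda^{\alpha}t}A(E-\lambda A)^{-1}f\,d\lambda$ over the part of $\gamma$ enclosing the eigenvalues with $N_{\nu}<q\le N_{\nu+1}$ equals $\sum_{N_{\nu}<q\le N_{\nu+1}}e^{-\lambda_{q}^{\alpha}t}P_{q}f$, i.e.\ the $\nu$-th block on the right-hand side, with $c_{q_{\xi}}(t)=e^{-\lambda_{q}^{\alpha}t}(f,e_{q_{\xi}})_{\mathfrak{H}}$.

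The core of the argument is the construction of $\{N_{\nu}\}$ and of the cross-cuts of $\gamma$. Writing $r_{n}=|\lambda_{n}|\uparrow\infty$, I would split the index sequence at every $n$ for which $r_{n+1}-r_{n}>C\,r_{n+1}^{1-1/\tau}$, place the cross-cut radii $R_{\nu}$ in the middle of these exceptional gaps (and, if an overlong run has to be broken up, in the middle of an ordinary gap of that run), and set $N_{\nu}:=\#\{n:r_{n}<R_{\nu}\}$. With this choice every within-block gap automatically satisfies $|\lambda_{N_{\nu}+k}|-|\lambda_{N_{\nu}+k-1}|\le C|\lambda_{N_{\nu}+k}|^{1-1/\tau}$ for $k=2,\dots,N_{\nu+1}-N_{\nu}$, which is the asserted relation. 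What has to be verified is that such a splitting does not generate blocks so numerous that the norm series diverges; this is where the hypothesis $(\ln^{1+1/\alpha}x)'_{\lambda_{i}(H)}=o(i^{-1/\alpha})$ enters, together with the lower growth of $\lambda_{i}(H)$ recorded by $\mu=\mu(H)$: it bounds, on every dyadic scale, the counting function of $\{|\lambda_{q}|\}$ by a fixed power of the scale, which, the assumption $\alpha>1$ being in force, is dominated by the factor $e^{-ctR_{\nu}^{\alpha}}$ furnished by the contour, since the unbounded part of $\gamma$ lies in a sector of small semi-angle and hence $\mathrm{Re}\,\lambda^{\alpha}\ge c|\lambda|^{\alpha}$ there.

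With the partition fixed, the bound $\sum_{\nu}\|\cdot\|_{\mathfrak{H}}<\infty$ follows by representing the $\nu$-th block as $\frac{1}{2\pi i}\int_{\partial\Omega_{\nu}}e^{-\lambda^{\alpha}t}A(E-\lambda A)^{-1}f\,d\lambda$, where $\Omega_{\nu}$ is the part of $\{r<|\lambda|\}\cap\mathfrak{L}_{\iota}(\theta_{\iota}+\varepsilon)\cap\mathfrak{L}_{0}(\theta_{0}+\varepsilon)$ between the cross-cuts at $R_{\nu}$ and $R_{\nu+1}$: on the lateral sides of the sector the standard sectorial bound gives $\|A(E-\lambda A)^{-1}\|\le C(1+|\lambda|)^{-1}$ and $|e^{-\lambda^{\alpha}t}|\le e^{-ct|\lambda|^{\alpha}}$, so those contributions are absolutely summable, while on each cross-cut the distance to the spectrum is at least half the width of the straddled gap, so by $\|A(E-\lambda A)^{-1}\|=\sup_{q}|\lambda_{q}-\lambda|^{-1}$ the corresponding arc term is at most $C\,e^{-ctR_{\nu}^{\alpha}}R_{\nu}^{\kappa}$ for a fixed $\kappa$, and since each cross-cut belongs to exactly two consecutive blocks the resulting series converges. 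I expect the genuinely delicate point to be the third step — reconciling the splitting needed to force the eigenvalue relation with $\sum_{\nu}\|\cdot\|_{\mathfrak{H}}<\infty$, that is, the quantitative analysis of the distribution of $\{\lambda_{i}(H)\}$ under the logarithmic hypothesis; the remaining steps are in essence a transcription of the proofs of Theorems \ref{T1} and \ref{T2}.
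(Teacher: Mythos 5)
You should first be aware that this paper does not actually prove Theorem \ref{T3}: it is stated among the ``auxiliary propositions'' and its proof is explicitly delegated to the companion paper \cite{firstab_lit:1kukushkin2021} (the text says the proofs of Theorems \ref{T1}--\ref{T3} are produced there). So there is no in-document proof to compare against; what can be judged is whether your plan would close on its own. Your skeleton is the right one and matches the natural-lacunae scheme the author refers to: for a normal $\tilde{W}$ the resolvent $A$ is compact and normal, the root subspaces are genuine orthogonal eigenspaces, $A(E-\lambda A)^{-1}=\sum_{q}(\lambda_{q}-\lambda)^{-1}P_{q}$, the blocks are residues between consecutive cross-cuts, and $\|A(E-\lambda A)^{-1}\|=\sup_{q}|\lambda_{q}-\lambda|^{-1}$ turns the width of the straddled gap into a resolvent bound. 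The identification $|\lambda_{q}|\asymp\mathrm{Re}\,\lambda_{q}=\lambda_{p(q)}(H)$ via sectoriality is also the correct bridge between the hypothesis on $\lambda_{i}(H)$ and the moduli $|\lambda_{q}|$.

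The genuine gap is that the one step you yourself flag as ``the genuinely delicate point'' is exactly the content of the theorem, and you leave it as an expectation rather than an argument. Concretely, two things must be \emph{proved} from the condition $(\ln^{1+1/\alpha}x)'_{\lambda_{i}(H)}=o(i^{-1/\alpha})$: (i) that there are infinitely many indices $n$ with $r_{n+1}-r_{n}>C\,r_{n+1}^{1-1/\tau}$, so that every block your rule produces is finite and the cross-cuts always sit in gaps of width $\geq C R_{\nu}^{1-1/\tau}$ (note $1-1/\tau<0$, so these are \emph{shrinking} gaps, and their existence is not automatic); and (ii) that the resulting cross-cut contributions $C\,R_{\nu}^{1/\tau}e^{-ctR_{\nu}^{\alpha}}$ are summable, which requires a lower bound on the growth of $R_{\nu}$ in $\nu$. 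Both follow from the quantitative consequence of the hypothesis, namely $\lambda_{i}(H)\,i^{-1/\alpha}\ln^{-1/\alpha}\lambda_{i}(H)\to\infty$: if all gaps beyond some index were $\leq C r^{1-1/\tau}$ one would get $r_{n}^{1/\tau}-r_{n-1}^{1/\tau}\leq C'$, hence $r_{n}=O(n^{\tau})=o(n^{1/\alpha})$, contradicting the hypothesis; and $R_{\nu}\geq r_{N_{\nu}}\gg \nu^{1/\alpha}$ makes the cross-cut series converge. Your appeal to ``bounding the counting function on every dyadic scale by a fixed power of the scale'' gestures at this but does not carry it out, and as written the proposal also never verifies that the blocks exhaust the spectrum (i.e.\ that $N_{\nu}\uparrow\infty$ with every eigenvalue in some finite block). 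Until that pigeonhole/contradiction argument is written down, the proposal is a correct plan but not a proof.
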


\section{Main results}

In this section we consider evolution  equations in the abstract Hilbert space.
Having used an abstract theorem     formulated    in terms of the operator order,
we produce an example of the class of differential equations for which the made refinement  regarding the convergence  exponent  is relevant. More precisely,   under the assumption $\rho=\alpha$  the sequence of contours  may be chosen in a concrete -- power type  way,    what  provides a peculiar  validity of the  statement.
We prove the  existence and uniqueness theorem       and supply it with a plenty of applications.    We  consider   applications  to the differential equations in the concrete Hilbert spaces and involve such operators as Riemann-Liouville operator, Kipriyanov operator,    Riesz potential, difference operator.  Moreover, we produce  the artificially constructed normal operator for which the clarification of the Lidskii V.B. results  relevantly works.
 Further, we will consider a Hilbert space $\mathfrak{H}$ which consists of   element-functions $u:\mathbb{R}_{+}\rightarrow \mathfrak{H},\,u:=u(t),\,t\geq0$    and we will assume that if $u$ belongs to $\mathfrak{H}$    then the fact  holds for all values of the variable $t.$ Notice that under such an assumption all standard topological  properties as completeness, compactness etc remain correctly defined.   We understand such operations as differentiation and integration in the generalized sense that is caused by the topology of the Hilbert space $\mathfrak{H},$  more detailed information can be found in the   Chapter 4  \cite{firstab_lit:Krasnoselskii M.A.}.
   Consider a Cauchy problem
\begin{equation}\label{1}
  \frac{du}{dt}=-\tilde{W}^{n}u,\;u(0)=h\in \mathrm{D}(\tilde{W}),\;n=1,2,...\,.
\end{equation}
In the case when $\tilde{W}^{n}$ is accretive, we can suppose $h\in \mathfrak{H}$
(here we should note that the case $n=1$ was considered by Lidskii V.B. \cite{firstab_lit:1Lidskii}). Bellow, we formulate a result   which follows from the auxiliary theorems given above.
 \begin{teo}\label{T4}
Assume that  the conditions of one of the   Theorems \ref{T1}, \ref{T2}, \ref{T3}, hold      under the assumption $\alpha=n,$   then
there exists a solution of the Cauchy problem \eqref{1} in the form
\begin{equation}\label{2}
u(t)= \frac{1}{2\pi i}\int\limits_{\gamma}e^{-\lambda^{n}t}A(E-\lambda A)^{-1}h d \lambda
=  \sum\limits_{\nu=0}^{\infty}  \sum\limits_{q=N_{\nu}+1}^{N_{\nu+1}}\sum\limits_{\xi=1}^{m(q)}\sum\limits_{i=0}^{k(q_{\xi})}e_{q_{\xi}+i}c_{q_{\xi}+i}(t),
\end{equation}
where
\begin{equation*}
  \sum\limits_{\nu=0}^{\infty}\left\|\sum\limits_{q=N_{\nu}+1}^{N_{\nu+1}}\sum\limits_{\xi=1}^{m(q)}\sum\limits_{i=0}^{k(q_{\xi})}e_{q_{\xi}+i}c_{q_{\xi}+i}(t)\right\|_{\mathfrak{H}}<\infty,
\end{equation*}
a sequence of natural numbers $\{N_{\nu}\}_{0}^{\infty}$ can be chosen in accordance with the claim of the  corresponding theorem.
Moreover, the existing solution is unique, if the operator $\tilde{W}^{n}$ is accretive.
\end{teo}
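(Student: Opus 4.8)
The plan is to verify that the function $u$ defined by the contour integral in \eqref{2} is the required solution, invoking the relevant one of Theorems \ref{T1}--\ref{T3} (with $\alpha=n$) only to guarantee that this integral is well defined and coincides with the Abel--Lidskii series on the right of \eqref{2}; the differential equation and the initial condition are then checked by direct contour manipulations, and uniqueness follows from an energy inequality.

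First I would treat the equation itself for $t>0$. Since $A=R_{\tilde{W}}$ and $\mathrm{N}(\tilde{W})=\{0\}$ (a consequence of the strict accretivity built into $\mathrm{H}2$), we have $\tilde{W}A=E$ on $\mathfrak{H}$, whence $\tilde{W}\bigl(A(E-\lambda A)^{-1}h\bigr)=(E-\lambda A)^{-1}h=h+\lambda A(E-\lambda A)^{-1}h$ for each $\lambda\in\gamma$. Because $\gamma$ lies inside a thin sector about the positive real axis, in which $\mathrm{Re}\,\lambda^{n}\geq c\,|\lambda|^{n}$ for some $c>0$ once $\varepsilon$ (and $\theta_{\iota}$) are chosen small relative to $n$, the factor $e^{-\lambda^{n}t}$ decays super-polynomially along $\gamma$ for every fixed $t>0$, while $\|(E-\lambda A)^{-1}\|$ and the powers $|\lambda|^{k}$ grow only polynomially on $\gamma$; hence the integrals below converge absolutely and the closed operator $\tilde{W}$ may be carried under the integral sign. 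Iterating the displayed identity and using $h\in\mathrm{D}(\tilde{W})$ — which is exactly what keeps $(E-\lambda A)^{-1}h$ in $\mathrm{D}(\tilde{W})$ at each step — one gets, for $k=1,\dots,n$,
$$
\tilde{W}^{k}u(t)=\frac{1}{2\pi i}\int_{\gamma}\lambda^{k-1}e^{-\lambda^{n}t}(E-\lambda A)^{-1}h\,d\lambda,
$$
the terms $\bigl(\tilde{W}^{j}h\bigr)\frac{1}{2\pi i}\int_{\gamma}\lambda^{m}e^{-\lambda^{n}t}\,d\lambda$ that arise along the way vanishing identically: their integrands are entire in $\lambda$, so $\gamma$ can be closed by a large arc where $e^{-\lambda^{n}t}$ decays, and Cauchy's theorem applies. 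Taking $k=n$, applying once more $(E-\lambda A)^{-1}h=h+\lambda A(E-\lambda A)^{-1}h$ and $\int_{\gamma}\lambda^{n-1}e^{-\lambda^{n}t}\,d\lambda=0$, we obtain
$$
\tilde{W}^{n}u(t)=\frac{1}{2\pi i}\int_{\gamma}\lambda^{n}e^{-\lambda^{n}t}A(E-\lambda A)^{-1}h\,d\lambda=-\frac{d}{dt}\,\frac{1}{2\pi i}\int_{\gamma}e^{-\lambda^{n}t}A(E-\lambda A)^{-1}h\,d\lambda=-u'(t),
$$
the middle equality being legitimate differentiation under the integral sign (same majorant, uniform in $t$ on compact subsets of $(0,\infty)$); so $u'(t)=-\tilde{W}^{n}u(t)$ and $u(t)\in\mathrm{D}(\tilde{W}^{n})$ for $t>0$.

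Next I would establish the initial condition $u(0)=h$, understood as $\lim_{t\to+0}u(t)=h$ in $\mathfrak{H}$. Here the series representation supplied by the corresponding auxiliary theorem is decisive: the coefficients $c_{q_{\xi}+i}(t)$ carry the factors $e^{-\lambda_{q}^{n}t}$, which tend to $1$ as $t\to+0$, so that the right-hand side of \eqref{2} formally degenerates into the expansion of $h$ over the root vectors of $A$. Making this rigorous amounts to passing to the limit $t\to+0$ under the sum over the blocks $(N_{\nu},N_{\nu+1}]$, for which one needs the block-summability estimate of the statement to hold uniformly for $t$ in a neighbourhood of $0$ — this uniformity being precisely what the construction of $\{N_{\nu}\}_{0}^{\infty}$ by the lacunae method, with the power-type contour $\gamma$, delivers — and then to identify the Abel-regularized sum of the root expansion with $h$, which holds for $h\in\mathrm{D}(\tilde{W})$. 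I expect this passage to the limit, together with the attendant uniform-in-$t$ control of the partial sums over the blocks, to be the main technical obstacle of the proof.

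Finally, for uniqueness suppose $\tilde{W}^{n}$ is accretive and let $v:=u_{1}-u_{2}$ be the difference of two solutions, so that $v(t)\in\mathrm{D}(\tilde{W}^{n})$ and $v'(t)=-\tilde{W}^{n}v(t)$ for $t>0$, $v(0)=0$, with $v$ continuous at $0$. Then
$$
\frac{d}{dt}\|v(t)\|_{\mathfrak{H}}^{2}=2\,\mathrm{Re}\,(v'(t),v(t))_{\mathfrak{H}}=-2\,\mathrm{Re}\,(\tilde{W}^{n}v(t),v(t))_{\mathfrak{H}}\leq 0,
$$
so $t\mapsto\|v(t)\|_{\mathfrak{H}}$ is non-increasing on $(0,\infty)$; letting $t\to+0$ gives $\|v(t)\|_{\mathfrak{H}}\leq\|v(0)\|_{\mathfrak{H}}=0$, hence $v\equiv 0$, i.e. the solution is unique.
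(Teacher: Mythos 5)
Your treatment of the equation itself and of the uniqueness claim is sound and close in spirit to the paper's. The paper extracts the $n$-th power slightly differently: it uses the identity $\lambda^{n}A^{n}(E-\lambda A)^{-1}=(E-\lambda A)^{-1}-(E+\lambda A+\dots+\lambda^{n-1}A^{n-1})$, applies $A^{n-1}$ to $u'_{t}$, kills the polynomial part by Cauchy's theorem, and only then applies $\tilde{W}^{n-1}$; you instead iterate $\tilde{W}$ under the integral sign. Both routes rest on the same observation that the entire remainders integrate to zero over $\gamma$, and both need only $h\in\mathrm{D}(\tilde{W})$ --- though in your iteration this is worth saying explicitly: the reason you never need $h\in\mathrm{D}(\tilde{W}^{2})$ is that the $\tilde{W}h$ term is integrated away at each stage \emph{before} $\tilde{W}$ is applied again to the surviving integral, by closedness of $\tilde{W}$. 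The energy argument for uniqueness is the same as the paper's.

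The genuine gap is the initial condition. You propose to obtain $u(t)\to h$ by letting $t\to+0$ in the Abel--Lidskii series and you yourself flag the required uniform-in-$t$ control of the block partial sums as the main technical obstacle --- but you never supply it, and it is not something Theorems \ref{T1}--\ref{T3} give you: they assert bracketed convergence for each fixed $t>0$, with no uniformity near $t=0$, and identifying the regularized sum at $t=0$ with $h$ is precisely the nontrivial content you would have to prove. The paper avoids this entirely by working with the contour integral itself: for $h\in\mathrm{D}(\tilde{W})$ the convergence $\frac{1}{2\pi i}\int_{\gamma}e^{-\lambda^{n}t}A(E-\lambda A)^{-1}h\,d\lambda\to h$ is exactly Lemma 7 of the companion paper, a direct resolvent estimate on $\gamma$, and the series plays no role in this step. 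Moreover, in the accretive case the paper extends the initial condition to arbitrary $h\in\mathfrak{H}$ by showing that the solution operator $S_{t}$ is a contraction (via the same energy identity you use for uniqueness) and then approximating $h$ by elements $h_{n}\in\mathrm{D}(\tilde{W})$; your write-up omits this density argument, which is needed for the claim, made just before the theorem, that one may take $h\in\mathfrak{H}$ when $\tilde{W}^{n}$ is accretive.
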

\begin{proof}
Let us    find a solution of problem \eqref{1}  in the form   \eqref{2}.
We need  prove that  the following integral   converges   i.e.
\begin{equation}\label{3a}
 \frac{1}{2\pi i} \int\limits_{\gamma}e^{-\lambda^{n}t} (E-\lambda A)^{-1}h \,d\lambda \in \mathfrak{H},\;h\in \mathfrak{H}.
\end{equation}
For this purpose  consider a contour $\gamma_{k}:=\{\lambda\in \gamma,\,|\lambda|<R_{k},\,k\in \mathbb{N}\},\,R_{k}\uparrow\infty.$ Using simple estimating, then applying Lemma 6 \cite{firstab_lit:1kukushkin2021}, we get
$$
 \left\| \int\limits_{\gamma_{k}}e^{-\lambda^{n}t} (E-\lambda A)^{-1}h d\lambda \right\|_{\mathfrak{H}}
 \leq  \int\limits_{\gamma_{k}}|e^{-\lambda^{n}t}| \cdot \|(E-\lambda A)^{-1}h  \|_{\mathfrak{H}} |d\lambda| \leq C \|h\|_{\mathfrak{H}} \int\limits_{\gamma_{k}} e^{-t \mathrm{Re}\lambda^{n}}   |d\lambda|.
$$
It is clear that
$$
\int\limits_{\gamma_{k}} e^{- t\mathrm{Re}\lambda^{n}}   |d\lambda|\rightarrow C,\,k\rightarrow\infty.
$$
The latter fact gives us the desired result.
Since $A$ is bounded, then  we have
$$
\tilde{W}u(t)=\tilde{W} \left(\frac{1}{2\pi i} \int\limits_{\gamma}e^{-\lambda^{n}t} A(E-\lambda A)^{-1}h \,d\lambda\right)=\tilde{W}A \left(\frac{1}{2\pi i} \int\limits_{\gamma}e^{-\lambda^{n}t} (E-\lambda A)^{-1}h \,d\lambda\right)=
$$
$$
 =  \frac{1}{2\pi i} \int\limits_{\gamma}e^{-\lambda^{n}t} (E-\lambda A)^{-1}h \,d\lambda.
$$
Combining the latter relation with \eqref{3a}, we obtain    $u\in \mathrm{D}(\tilde{W}).$
Analogously to the above,  using   Lemma 6 \cite{firstab_lit:1kukushkin2021}, we can show that the the following derivative exists i.e.
\begin{equation*}
\frac{d u}{d t}=-\frac{1}{2\pi i} \int\limits_{\gamma}e^{-\lambda^{n}t}  \lambda^{n } A(E-\lambda A)^{-1}h \,d\lambda \in \mathfrak{H}.
\end{equation*}
 Notice  that $\lambda^{n} A^{n}(E-\lambda A)^{-1}=(E-\lambda A)^{-1}-(E+\lambda A+...+\lambda^{n-1}A^{n-1}),$
substituting this relation to the above formula,   we obtain
$$
A^{n-1}\frac{d u}{d t}=-\frac{1}{2\pi i} \int\limits_{\gamma}e^{-\lambda^{n}t}   (E-\lambda A)^{-1}h\, d\lambda+\frac{1}{2\pi i} \int\limits_{\gamma}e^{-\lambda^{n}t} \sum\limits_{k=0}^{n-1}\lambda^{k}A^{k}  h \,d\lambda.
$$
The second integral equals zero by virtue  of  the fact  that the   function under the integral is analytical inside the intersection of the  domain $G$  with the circle of  the arbitrary radius $R$   and it   decreases sufficiently fast on the arch of the radius $R,$ when $R\rightarrow\infty,$  here we denote  by $G$ the interior of the contour $\gamma.$      Thus, we have come to the relation
$$
A^{n-1}\frac{d u}{d t}=-\frac{1}{2\pi i} \int\limits_{\gamma}e^{-\lambda^{n}t}   (E-\lambda A)^{-1}h\, d\lambda .
$$
Since the left-hand side of the latter relation belongs to $\mathrm{D}(\tilde{W}^{n-1}),$ then we can claim that it is so  for the right-hand side also. It follows that $u\in \mathrm{D}(W^{n}).$ Now, if we recall the expression for $u,$ we get
$
A^{n-1}   u'_{t}  +\tilde{W }u=0.
$
Applying  the operator  $\tilde{W }^{n-1}$ to  both sides of the latter relation,  we obtain the fact that $u$ is a solution of  equation \eqref{1}. Let us show that the initial condition holds in the sense
$
u(t)   \xrightarrow[   ]{\mathfrak{H}}  h,\,t\rightarrow+0.
$
It becomes clear in the case  $h\in \mathrm{D}(\tilde{W})$ for in this case  it suffices to apply Lemma 7   \cite{firstab_lit:1kukushkin2021}, what  gives  us the desired result i.e. we can    put $u(0)=h.$  Consider a case when $h$ is an arbitrary element of the Hilbert space $\mathfrak{H}$  and let us involve the accretive property of the operator $\tilde{W}^{n}.$
Consider an operator
$$
S_{t}h=\frac{1}{2\pi i} \int\limits_{\gamma}e^{-\lambda^{n}t}A(E-\lambda A)^{-1}h\, d\lambda,\,t>0.
$$
In accordance with the above, it is clear that  $S_{t}:\mathfrak{H}\rightarrow \mathfrak{H}.$
Let us prove  that
$
\|S_{t}\|_{\mathfrak{H}\rightarrow \mathfrak{H}}\leq1,\;t>0.
$
Firstly, assume   that $h\in \mathrm{D}(\tilde{W}).$
Let us multiply the both sides of   relation \eqref{1} on $u$  in the sense of the inner product, we get
$
\left(u'_{t},u\right)_{\mathfrak{H}}+(\tilde{W}^{n}u,u)_{\mathfrak{H}}=0.
$
Consider a real part of the latter relation, we have
$
\mathrm{Re}\left(u'_{t},u\right)_{\mathfrak{H}}+\mathrm{Re}(\tilde{W}^{n}u,u)_{\mathfrak{H}}= \left(u'_{t},u\right)_{\mathfrak{H}}/2+ \left(u, u'_{t}\right)_{\mathfrak{H}}/2+\mathrm{Re}(\tilde{W}^{n}u,u)_{\mathfrak{H}}.
$
Therefore
$
   \left(\|u(t)\|_{\mathfrak{H}}^{2}\right)'_{t}  =-2\mathrm{Re}(\tilde{W}^{n}u,u)_{\mathfrak{H}}\leq 0.
$
Integrating both sides, we get
$$
  \|u(\tau)\|_{\mathfrak{H}}^{2}-  \|u(0)\|_{\mathfrak{H}}^{2}=\int\limits_{0}^{\tau} \frac{d }{dt}\|u(t)\|_{\mathfrak{H}}^{2} dt\leq 0.
$$
The last relation can be rewritten in the form
$
\|S_{t}h\|_{\mathfrak{H}}\leq \|h\|_{\mathfrak{H}},\,h\in \mathrm{D}(\tilde{W}).
$
Since $\mathrm{D}(\tilde{W})$ is a dense set in $ \mathfrak{H},$ then we obviously  obtain  the  desired result i.e. $\|S_{t}\|_{\mathfrak{H}\rightarrow \mathfrak{H}}\leq 1.$
 Now, having assumed that
$
h_{n}   \xrightarrow[   ]{\mathfrak{H}}  h,\,n\rightarrow \infty,\;\{h_{n}\}\subset \mathrm{D}(\tilde{W}),\,h\in \mathfrak{H},
$
consider the following reasonings
$
\|u(t)-h\|_{\mathfrak{H}}=\|S_{t}h-h\|_{\mathfrak{H}}=\|S_{t}h-S_{t}h_{n}+S_{t}h_{n}-h_{n}+h_{n}-    h\|_{\mathfrak{H}}\leq \|S_{t}\|\cdot\|h- h_{n}\|_{\mathfrak{H}}+\|S_{t}h_{n}-h_{n}\|_{\mathfrak{H}}+\|h_{n}-    h\|_{\mathfrak{H}}.
$
Note that
$
S_{t}h_{n}   \xrightarrow[   ]{\mathfrak{H}}  h_{n},\,t\rightarrow+0.
$
It is clear that  if we chose $n$ so that  $\|h- h_{n}\|_{\mathfrak{H}}<\varepsilon/3$ and  after that  chose $t$   so that $\|S_{t}h_{n}-h_{n}\|_{\mathfrak{H}}<\varepsilon/3,$ then we obtain
 $\forall\varepsilon>0,\,\exists \delta(\varepsilon):\,\|u(t)-h\|_{\mathfrak{H}}<\varepsilon,\,t<\delta.$
   Thus, we can put  $u(0)=h$ and claim that   the initial condition holds in the case $h\in \mathfrak{H}.$   The decomposition   on  the series  of the root vectors \eqref{2} is given by virtue of    Theorems \ref{T1}, \ref{T2}, \ref{T3} respectively.   The uniqueness follows easily from the fact that $\tilde{W}^{n}$ is accretive.  In this case, repeating the previous reasonings we come to
\begin{equation}\label{3}
  \|\phi(\tau)\|_{\mathfrak{H}}^{2}-  \|\phi(0)\|_{\mathfrak{H}}^{2}=\int\limits_{0}^{\tau} \frac{d }{dt}\|\phi(t)\|^{2} dt\leq 0,
\end{equation}
where $\phi $ is a sum of    solutions $u_{1} $ and $u_{2}.$ Notice that  by virtue of the initial conditions, we have  $\phi(0)=0,$  thus    relation \eqref{3} can hold only if $\phi =0.$  The proof is complete.
\end{proof}
\begin{remark}\label{R1} Note that the  assumption $ n>\rho ,$ that is   additional  to the ones of the above theorems, guaranies an opportunity to chose
 a sequence of numbers  $\{N_{\nu}\}_{0}^{\infty}$     so that in the used terms, we have
 $$
|\lambda_{N_{\nu}+k}|-|\lambda_{N_{\nu}+k-1}|\leq  e^{-|\lambda_{N_{\nu}}|^{\tau}} ,\;0<\tau<n-\rho.
$$
This fact follows from the results by Lidskii  V.B. \cite[p.23]{firstab_lit:1Lidskii}.
\end{remark}

\noindent{\bf Evolution  equations with the quasi-polynomial right-hand side }\\

Let $I:=(a,b)\subset \mathbb{R},\,\Omega:=[0,\infty),$ consider the functions  $u(t,x),\,t\in \Omega,\,x\in \bar{I}.$ In accordance with the said above, we will consider functional spaces with respect to the variable $x$ and we will assume that if $u$ belongs to a functional space then the fact  holds for all values of the variable $t.$
Consider a Cauchy problem
\begin{equation}\label{4}
     \frac{d u}{dt}=\sum\limits_{k=1}^{s} C_{k} D^{ k \vartheta}_{a+}u=:P_{s,\vartheta} u ,\; \vartheta>0,\;
       u(0,x)=h(x)\in L_{2}(I),
\end{equation}
where at the right-hand side we have a linear combination of the Riemann-Liouville fractional differential operators   acting in $L_{2}(I)$ with respect to the variable $x.$
 We will call (analogously to the theory of ordinary differential equations) the right-hand side of   equation \eqref{4} {\it quasi-polynomial}. Consider a case when the right-hand side of   equation \eqref{4} can be represented as follows
\begin{equation}\label{5}
P_{s,\vartheta} u= - (\eta D^{2}+\xi D^{\beta}_{a+})^{n}u,\;0<\beta<1/n,\,\eta<0,\,\xi >0.
\end{equation}
Let us show that under such assumptions   problem \eqref{4}  has a unique solution which can be found due to a certain formula.
Consider the operator
$
W:=\eta D^{2}+\xi D^{\beta}_{a+} ,\,\mathrm{D}(W)=C_{0}^{\infty}(I).
$
Note that the hypotheses  $\mathrm{H}1,\mathrm{H}2$  hold regarding the operator,  if we assume that $\mathfrak{H}:=L_{2}(I),\,\mathfrak{H}_{+}:=H^{1}_{0}(I).$  It follows from the strictly accretive property of the fractional differential operator (see \cite{kukushkin2019}) and the estimate
\begin{equation}\label{6}
\|D^{\beta}_{a+}f\|_{L_{2}}\leq C \|f\|_{H^{1}_{0}},\;f\in C_{0}^{\infty}(I),\,\beta\in(0,1).
\end{equation}
Let us prove that
\begin{equation}\label{7}
 -C (D^{2}f,f)_{L_{2}}\leq\mathrm{Re}(Wf,f)_{L_{2}}\leq -C (D^{2}f,f)_{L_{2}},\;f\in C_{0}^{\infty}(I).
\end{equation}
 Using    relation \eqref{6} and the Friedrichs inequality, we obtain
$
\mathrm{Re}(D^{\beta}_{a+}f,f)_{L_{2}}\leq\|f\|_{L_{2}}\|D^{\beta}_{a+}f\|_{L_{2}}\leq C\|f\|^{2}_{H_{0}^{1}}=-C(D^{2}f,f)_{L_{2}},\,f\in C_{0}^{\infty}(I),
$
  what gives us  the upper estimate.
The lower estimate follows easily from the  accretive   property of the fractional differential operator of the order less than one.
 Using   relation \eqref{7}, the corollary  of the minimax principle, we get
$
-\lambda_{j}(H)\asymp \lambda_{j}(D^{2}),
$
where  $ H$  is a real part   of the operator $\tilde{W}.$ Therefore, taking into account the well-known fact
$
 \lambda_{j}(D^{2})= -\pi^{2}j^{2}/(b-a)^{2},
$
we get
$
\lambda_{j}(H)\asymp j^{2},
$
it follows that   $\mu(H)>1.$
Now we can  study   the Cauchy  problem \eqref{4}  by restricting  the one  to   the problem
\begin{equation}\label{8}
\frac{du}{dt} =   -(\eta D^{2}+\xi D^{\beta }_{a+})^{n}u,\;u(0)=h\in \mathrm{D}(\tilde{W}).
\end{equation}
In accordance with   Theorem   \ref{T4},   we are able to present a solution of
the problem \eqref{4}, with the restricted assumptions regarding $h,$ as  follows
$$
 u(t)=\frac{1}{2 \pi i} \int\limits_{\gamma}e^{-\lambda^{n}t}  A (E-\lambda A )^{-1}hd\lambda,
$$
where the used terms   relate  to the operator    $\tilde{W} .$ Thus, we have in the reminder a question how  to weaken conditions imposed upon the function $h$ as well as wether the representation \eqref{5} holds. To answer the questions consider the following reasonings.
 Further, for the sake of the simplicity, we consider a case when $\eta =-1,\,\xi =1.$   This assumption does not restrict generality of reasonings.
Let us show that
$
\mathrm{D}(\tilde{W})\subset   H^{2}_{0}(I).
$
Using $\mathrm{H}2,$ we have the implication
$$
 f_{k}  \xrightarrow[      W       ]{}f \Longrightarrow  f_{k} \xrightarrow[              ]{H_{0}^{1}} f ,\;\{f_{k}\}_{1}^{\infty}\subset C_{0}^{\infty}(I).
$$
Applying  \eqref{6}, we get
$
  D^{\beta}_{a+}f_{k} \xrightarrow[              ]{L_{2}} D^{\beta}_{a+}f.
$
The following fact can be obtained easily, we have omitted the proof
$$
\{f_{k}  \xrightarrow[      W       ]{}f,\;  D^{\beta}_{a+}f_{k} \xrightarrow[              ]{L_{2}} D^{\beta}_{a+}f \}\Longrightarrow D^{2}f_{k}\xrightarrow[              ]{L_{2}}D^{2}f .
$$
Combining the above implications  we obtain the desired result i.e. $
\mathrm{D}(\tilde{W})\subset   H^{2}_{0}(I).
$
Consider a set
$
H_{0+}^{s}(I):=\{f:\,f\in H^{s}(I),\,f^{(k)}(a)=0,\,k=0,1,...,s-1\},\,s\in \mathbb{N}.
$
It is clear that $H^{s}_{0}(I)\subset H_{0+}^{s}(I),$ thus we can    define the operator $W_{+}$ as the extension  of the operator $\tilde{W}$ on the set $H_{0+}^{2}(I),$    we have $\tilde{W}\subset W_{+}.$  Let us show that
$\mathrm{D}(W^{n}_{+})=H_{0+}^{2n}(I).$ Assume that  $f\in H_{0+}^{2n}(I),$ then $f\in \mathrm{D}(W^{n}_{+}),$ it can be verified directly.
If $f\in \mathrm{D}(W^{n}_{+}),$ then in accordance with the definition, we have  $W^{n-1}_{+}f\in H_{0+}^{2}(I).$ It follows that
$
W_{+}g_{1}\in H_{0+}^{2}(I),$ where  $g_{1}=W^{n-2}_{+}f\in H_{0+}^{2}(I).
$
Hence
$
D^{2}g_{1}+D^{\beta}_{a+}g_{1}\in H_{0+}^{2}(I).
$
Applying the operator $I^{2}_{a+}$ to the both sides of the last relation, we easily  get
\begin{equation}\label{9}
 g_{1}+I^{2-\beta}_{a+}g_{1}\in H_{0+}^{4}(I).
\end{equation}
 Using the constructive features of relation \eqref{9}, we can conclude firstly  $g_{1}\in H_{0+}^{3}(I)$  and due to the same reasonings establish the fact
  $g_{1}\in H_{0+}^{4}(I)$ secondly, what gives us  $W^{n-2}_{+}f\in H_{0+}^{4}(I).$ Using the absolutely analogous reasonings we prove that
  $W^{n-k}_{+}f\in H_{0+}^{2k}(I),\;k= 1,2,...,n.$ Thus, we obtain the desired result.  Let us show that
\begin{equation}\label{10}
W^{n}_{+}f=\sum\limits_{k=0}^{n}(-1)^{n-k}C_{n}^{k} D^{\beta k+2(n-k)}_{a+}f,\;f\in H_{0+}^{2n}(I).
\end{equation}
We need  establish the formula
$
D^{\beta k}_{a+}D^{2(n-k)}f=D^{2(n-k)} D^{\beta k}_{a+}f= D_{a+}^{\beta k+2(n-k)} f,
 \,f\in H_{0+}^{2n}(I),\,
 (k= 1,2,...,n)
$
for this purpose, in accordance with Theorem 2.5   \cite[p.46]{firstab_lit:samko1987},  we should prove  that
$
f\in I^{\beta k +2(n-k)}_{a+}(L_{1})
$
or
$
f\stackrel{a.e.}=I^{2(n-k)}_{a+}\varphi,\;\varphi\in I^{\beta k }_{a+}(L_{1}).
$
We get $f\stackrel{a.e.}= I^{2n}_{a+}D^{2n}f = I^{2(n-k)}_{a+}D^{2(n-k)}f=   I^{2(n-k)}_{a+}\varphi,$ where $\varphi:=D^{2(n-k)}f.$
 Note that  the conditions of Theorem 13.2 \cite[p.229]{firstab_lit:samko1987} hold   i.e. the Marchaud derivative of the function $\varphi$ belongs to $L_{1}(I).$ Hence   $\varphi\in I^{\beta k }_{a+}(L_{1})$  and we obtain the required formula.
  Using the well-known   formulas for linear operators $(A+B)C\supseteq AC+BC,\,C(A+B)=CA+CB,$   applying the Leibniz formula, we obtain \eqref{10}.
Now, combining  the obvious inclusion  $\tilde{W}^{n}\subset W^{n}_{+}$ with  \eqref{10},  we get
$$
\tilde{W}^{n}\subset \sum\limits_{k=0}^{n}(-1)^{n-k}C_{n}^{k} D^{\beta k+2(n-k)}_{a+}.
$$
The next question is wether the operator $\tilde{W}^{n}$ is accretive.  By direct calculation, we have
$$
\mathrm{Re}(\tilde{W}^{n}f,f)_{L_{2}}= \sum\limits_{k=0}^{n} C_{n}^{k}  \mathrm{Re}\left( D^{ \beta k+(n-k)}_{a+}f,D^{n-k}f   \right)_{L_{2}}=
 \sum\limits_{k=0}^{n} C_{n}^{k}  \mathrm{Re}\left(D_{a+}^{\beta k}  g_{k},g_{k}   \right)_{L_{2}}\!\! \geq 0,
$$
where $g_{k}:=D^{n-k}f,\,f\in \mathrm{D}(\tilde{W}^{n}).$
Note that the last inequality holds by virtue of the strictly accretive property of the fractional differential operator of the order less than one (see \cite{kukushkin2019}).
 Thus, the uniqueness and the opportunity to weaken conditions imposed on $h$  follow    from Theorem  \ref{T4}. Here we should remark that the latter theorem  gives us the fact that the existing solution is unique in the set $\mathrm{D}(\tilde{W}^{n}),$ but using the same method we can   establish   the  uniqueness of the solution  of   problem  \eqref{4}.   Having known the root vectors of the operators $A,$ applying  formula \eqref{2}, we can represent the obtained solution as a series.\\

\noindent{\bf Kipriyanov operator}\\

Using     notations of the paper     \cite{firstab_lit:kipriyanov1960} we assume that $\Omega$ is a  convex domain of the  $m$ -  dimensional Euclidean space $\mathbb{E}^{m}$, $P$ is a fixed point of the boundary $\partial\Omega,$
$Q(r,\mathbf{e})$ is an arbitrary point of $\Omega;$ we denote by $\mathbf{e}$   a unit vector having a direction from  $P$ to $Q,$ denote by $r=|P-Q|$   the Euclidean distance between the points $P,Q,$ and   use the shorthand notation    $T:=P+\mathbf{e}t,\,t\in \mathbb{R}.$
We   consider the Lebesgue  classes   $L_{p}(\Omega),\;1\leq p<\infty $ of  complex valued functions.  For the function $f\in L_{p}(\Omega),$    we have
\begin{equation}\label{11}
\int\limits_{\Omega}|f(Q)|^{p}dQ=\int\limits_{\omega}d\chi\int\limits_{0}^{d(\mathbf{e})}|f(Q)|^{p}r^{m-1}dr<\infty,
\end{equation}
where $d\chi$   is an element of   solid angle of
the unit sphere  surface (the unit sphere belongs to $\mathbb{E}^{m}$)  and $\omega$ is a  surface of this sphere,   $d:=d(\mathbf{e})$  is the  length of the  segment of the  ray going from the point $P$ in the direction
$\mathbf{e}$ within the domain $\Omega.$
Without lose of   generality, we consider only those directions of $\mathbf{e}$ for which the inner integral on the right-hand  side of equality \eqref{11} exists and is finite. It is  the well-known fact that  these are almost all directions.
 We use a shorthand  notation  $P\cdot Q=P^{i}Q_{i}=\sum^{m}_{i=1}P_{i}Q_{i}$ for the inner product of the points $P=(P_{1},P_{2},...,P_{m}),\,Q=(Q_{1},Q_{2},...,Q_{m})$ which     belong to  $\mathbb{E}^{m}.$
     Denote by  $D_{i}f$  a distributional derivative of the function $f$ with respect to a coordinate variable with index   $1\leq i\leq m.$

Here, we study a case $\beta \in (0,1).$ Assume that  $\Omega\subset \mathbb{E}^{m}$ is  a convex domain, with a sufficient smooth boundary ($ C ^{3}$ class)   of the m-dimensional Euclidian space. For the sake of the simplicity we consider that $\Omega$ is bounded, but  the results  can be extended     to some type of    unbounded domains.
In accordance with the definition given in  the paper  \cite{firstab_lit:1kukushkin2018}, we consider the directional  fractional integrals.  By definition, put
$$
 (\mathfrak{I}^{\beta }_{0+}f)(Q  ):=\frac{1}{\Gamma(\beta )} \int\limits^{r}_{0}\frac{f (P+t \mathbf{e} )}{( r-t)^{1-\beta }}\left(\frac{t}{r}\right)^{m-1}\!\!\!\!dt,\,(\mathfrak{I}^{\beta }_{d-}f)(Q  ):=\frac{1}{\Gamma(\beta )} \int\limits_{r}^{d }\frac{f (P+t\mathbf{e})}{(t-r)^{1-\beta }}\,dt,
$$
$$
\;f\in L_{p}(\Omega),\;1\leq p\leq\infty.
$$
Also,     we   consider auxiliary operators,   the so-called   truncated directional  fractional derivatives    (see \cite{firstab_lit:1kukushkin2018}).  By definition, put
 \begin{equation*}
 ( \mathfrak{D} ^{\beta}_{0+,\,\varepsilon}f)(Q)=\frac{\beta }{\Gamma(1-\beta )}\int\limits_{0}^{r-\varepsilon }\frac{ f (Q)r^{m-1}- f(P+\mathbf{e}t)t^{m-1}}{(  r-t)^{\beta  +1}r^{m-1}}   dt+\frac{f(Q)}{\Gamma(1-\beta )} r ^{-\beta },\;\varepsilon\leq r\leq d ,
 $$
 $$
 (\mathfrak{D}^{\beta }_{0+,\,\varepsilon}f)(Q)=  \frac{f(Q)}{\varepsilon^{\beta }}  ,\; 0\leq r <\varepsilon ;
\end{equation*}
\begin{equation*}
 ( \mathfrak{D }^{\beta }_{d-,\,\varepsilon}f)(Q)=\frac{\beta }{\Gamma(1-\beta )}\int\limits_{r+\varepsilon }^{d }\frac{ f (Q)- f(P+\mathbf{e}t)}{( t-r)^{\beta  +1}} dt
 +\frac{f(Q)}{\Gamma(1-\beta )}(d-r)^{-\beta },\;0\leq r\leq d -\varepsilon,
 $$
 $$
  ( \mathfrak{D }^{\beta }_{d-,\,\varepsilon}f)(Q)=      \frac{ f(Q)}{\beta } \left(\frac{1}{\varepsilon^{\beta }}-\frac{1}{(d -r)^{\beta } }    \right),\; d -\varepsilon <r \leq d .
 \end{equation*}
  Now, we can  define  the directional   fractional derivatives as follows
 \begin{equation*}
 \mathfrak{D }^{\beta }_{0+}f=\lim\limits_{\stackrel{\varepsilon\rightarrow 0}{ (L_{p}) }} \mathfrak{D }^{\beta }_{0+,\varepsilon} f  ,\;
  \mathfrak{D }^{\beta }_{d-}f=\lim\limits_{\stackrel{\varepsilon\rightarrow 0}{ (L_{p}) }} \mathfrak{D }^{\beta }_{d-,\varepsilon} f ,\,1\leq p\leq\infty.
\end{equation*}
The properties of these operators  are  described  in detail in the paper  \cite{firstab_lit:1kukushkin2018}.  We suppose  $\mathfrak{I}^{0}_{0+} =I.$ Nevertheless,   this    fact can be easily proved dy virtue of  the reasonings  corresponding to the one-dimensional case and   given in \cite{firstab_lit:samko1987}. We also consider integral operators with a weighted factor (see \cite[p.175]{firstab_lit:samko1987}) defined by the following formal construction
$$
 \left(\mathfrak{I}^{\beta }_{0+}\xi f\right) (Q  ):=\frac{1}{\Gamma(\beta )} \int\limits^{r}_{0}
 \frac{(\xi f) (P+t\mathbf{e})}{( r-t)^{1-\beta }}\left(\frac{t}{r}\right)^{m-1}\!\!\!\!dt,
$$
where $\xi$ is a real-valued  function.

Consider a linear combination of the uniformly elliptic operator, which is written in the divergence form, and
  a composition of a   fractional integro-differential  operator, where the fractional  differential operator is understood as the adjoint  operator  regarding  the Kipriyanov operator  (see  \cite{firstab_lit:kipriyanov1960},\cite{firstab_lit:1kipriyanov1960},\cite{kukushkin2019})
\begin{equation*}
 L  :=-  \mathcal{T}  \, +\mathfrak{I}^{\sigma}_{ 0+}\xi\, \mathfrak{D}  ^{ \beta  }_{d-},
\; \sigma\in[0,1) ,
 $$
 $$
   \mathrm{D}( L )  =H^{2}(\Omega)\cap H^{1}_{0}( \Omega ),
  \end{equation*}
where
$\,\mathcal{T}:=D_{j} ( a^{ij} D_{i}\cdot),\,i,j=1,2,...,m,$
under    the following  assumptions regarding        coefficients
\begin{equation} \label{12}
     a^{ij}(Q) \in C^{2}(\bar{\Omega}),\, \mathrm{Re} a^{ij}\xi _{i}  \xi _{j}  \geq   \gamma_{a}  |\xi|^{2} ,\,  \gamma_{a}  >0,\,\mathrm{Im }a^{ij}=0 \;(m\geq2),\,
 \xi\in L_{\infty}(\Omega).
\end{equation}
Note that in the one-dimensional case the operator $\mathfrak{I}^{\sigma }_{ 0+} \xi\, \mathfrak{D}  ^{ \beta  }_{d-}$ is reduced to   a  weighted fractional integro-differential operator  composition, which was studied properly  by many researchers
    \cite{firstab_lit:2Dim-Kir}, \cite{firstab_lit:15Erdelyi},  \cite{firstab_lit:9McBride},  \cite{firstab_lit:nakh2003}, more detailed historical review  see  in \cite[p.175]{firstab_lit:samko1987}.  In accordance with Theorem 3 \cite{kukushkin2021a}, we claim  that the hypotheses $\mathrm{H}1,\mathrm{H}2$ are fulfilled if $\gamma_{a}$ is sufficiently large in comparison with $\|\xi\|_{\infty},$  where we put $\mathfrak{M}:=C_{0}^{\infty}(\Omega).$  Note that the order $\mu$ of the operator $H$ can be evaluated easily  through  the order of the regular differential operator   and since the latter can be found by methods described in \cite{firstab_lit:Rosenblum}. More precisely, we have
$$
C(\mathfrak{Re}\mathcal{T}f,f)_{\mathfrak{H}} \leq(Hf,f)_{\mathfrak{H}}\leq C (\mathfrak{Re}\mathcal{T}f,f)_{\mathfrak{H}},\,f\in C_{0}^{\infty}(\Omega).
$$
Applying the minimax principle, we get $\lambda_{j}(H)\asymp \lambda_{j}(\mathfrak{Re}\mathcal{T}).$
Using  the well-known formula for regular differential operators   $\lambda_{j}(\mathfrak{Re}\mathcal{T})\asymp j^{2/m}$ (see  \cite{firstab_lit:Rosenblum}), we get  $\mu(H)=2/m.$ Therefore, if we assume that $2\leq m<n,$ then in accordance with Theorem \ref{T4},   we can claim that  there exists a solution of   problem \eqref{1}
where $W$ is a restriction of $L$ on the set $C_{0}^{\infty}(\Omega),$  the coefficients \eqref{12}   are sufficiently smooth to guaranty  the fact  the right-hand side of \eqref{1}   has a sense. Note that the solvability of the   uniqueness problem as well as the opportunity to extend the initial condition depends on the accretive property of the operator $\tilde{W}^{n}.$ The latter problem can be studied by the methods similar to the ones used in the previous paragraph. Indeed, we have established the accretive property in the one-dimensional case.     \\

\noindent{\bf Riesz potential}\\

Consider a   space $L_{2}(\Omega),\,\Omega:=(-\infty,\infty).$      We denote by $H^{2, \varsigma}_{0}(\Omega)$ the completion of the set  $C^{\infty}_{0}(\Omega)$  with the norm
$$
\|f\|_{H^{2,\varsigma}_{0}(\Omega)}=\left\{\|f\|^{2}_{L_{2}(\Omega) }+\|D^{2}f \|^{2}_{L_{2}(\Omega,\omega^{\varsigma})} \right\}^{1/2},\,  \varsigma\in \mathbb{R},
$$
where $\omega(x):=  (1+|x|).$
Let us notice the following fact  (see Theorem 1 \cite{firstab_lit:1Adams}), if $\varsigma>4,$ then
$
H^{2,\varsigma}_{0}(\Omega)\subset\subset L_{2}(\Omega).
$
Consider a Riesz potential
$$
I^{\beta}f(x)=B_{\beta}\int\limits_{-\infty}^{\infty}f (s)|s-x|^{\beta-1} ds,\,B_{\beta}=\frac{1}{2\Gamma(\beta)  \cos  \beta \pi / 2   },\,\beta\in (0,1),
$$
where $f$ is in $L_{p}(\Omega),\,1\leq p<1/\beta.$
It is  obvious that
$
I^{\beta}f= B_{\beta}\Gamma(\beta) (I^{\beta}_{+}f+I^{\beta}_{-}f),
$
where
$$
I^{\beta}_{\pm}f(x)=\frac{1}{\Gamma(\beta)}\int\limits_{0}^{\infty}f (s\mp x) s ^{\beta-1} ds,
$$
the last operators are known as fractional integrals on the  whole  real axis   (see \cite[p.94]{firstab_lit:samko1987}).
 Following the idea of the   monograph \cite[p.176]{firstab_lit:samko1987}
 consider a sum of a differential operator and  a composition of    fractional integro-differential operators
\begin{equation*}
 W   :=  D^{2} a  D^{2}  +   I^{2(1-\beta)}D^{2}+\delta,\;\mathrm{D}(W)=C^{\infty}_{0}(\Omega),\;3/4<\beta<1,
\end{equation*}
where
$
 a(x)\in L_{\infty}(\Omega)\cap C^{ 2  }( \Omega ),\, \mathrm{Re}\,a(x) >\gamma_{a}(1+|x|)^{5},\;\gamma_{a}, \delta >C_{\beta}.
$
Let $\Omega':=[0,\infty),$ consider the functions  $u(t,x),\,t\in \Omega',\,x\in \Omega.$ Similarly to the previous paragraph, we will consider functional spaces with respect to the variable $x$ and we will assume that if $u$ belongs to a functional space then this  fact  holds for all values of the variable $t,$ wherewith all standard topological  properties of a space as completeness, compactness e.t.c. remain correctly defined.
Consider a Cauchy problem \eqref{1}  in the corresponding terms,  under the additional assumptions  $a(x)\in C^{ 2n  }( \Omega ),\,a^{(i)}(x)\in L_{\infty}(\Omega),\,i=1,2,...,2n.$
 Notice that in accordance with the results  \cite{kukushkin2021a}, we claim  that the  hypothesis $\mathrm{H}1,\mathrm{H}2$ hold  regarding: the operator $\tilde{W} ,$ the set  $C^{\infty}_{0}(\Omega),$ the spaces $L_{2}(I),H^{2,\,5}_{0}(\Omega),$   more precisely we should put
$
\mathfrak{H}:=L_{2}(\Omega),\,\mathfrak{H}_{+}:=H^{2,\,5}_{0}(\Omega),\,              \mathfrak{M}:=C^{\infty}_{0}(\Omega).
$
Thus, in accordance with the hypothesis $\mathrm{H}2,$ we have
$$
(H f,f)_{L_{2}}=\mathrm{Re}( W  f,f)_{L_{2}}\geq \gamma_{a}\|f\|^{2}_{H^{2,5}_{0}}= C( D^{2}wD^{2} f,f)_{L_{2}}+C( f,f)_{L_{2}},\,f\in C^{\infty}_{0}(\Omega).
$$
where  $w(x)=(1+|x|)^{5}.$ Let us consider the operator $B=D^{2}wD^{2}+I, \mathrm{D}(B)=C^{\infty}_{0}(\Omega)$ it is clear that by virtue of the minimax principle, we can estimate the eigenvalues of the operator $\tilde{W}$ via estimating the eigenvalues of the operator $\tilde{B}.$ Hence, we have  come to the problem of estimating the eigenvalues of the singular  operator. Here, we should point out that there exists the Fefferman concept that covers such a kind of problems. For instance, the Rozenblyum result is presented in the monograph
\cite[p.47]{firstab_lit:Rosenblum}, in accordance with which we can chose such an unbounded subset of $\mathbb{R}$ that the relation $\lambda_{j}(\tilde{B})\asymp j^{4}$ holds.   Thus, we left this question to the reader for  a more detailed  study    and reasonably allow ourselves to  assume that the  condition $\mu(H)=4$ holds. In this case,  in accordance with
 Theorem \ref{T4}, we are able to present a solution of  the problem \eqref{1}  in the form
$$
 u(t)=\frac{1}{2 \pi i} \int\limits_{\gamma}e^{-\lambda^{n}t}  A(E-\lambda A)^{-1}hd\lambda.
$$
 Now assume additionally that $\mathrm{Im} a=0,$ then $D^{2} a  D^{2}$ is selfadjoint. It follows that $\tilde{W}$ is selfadjoint   and we can easily prove that
$
\mathrm{Re}(\tilde{W}^{n} f,f)_{\mathfrak{H}}\geq0,\;n=1,2,...,\, .
$
Therefore,   applying Theorem \ref{T1},  we can assume  that $h\in \mathfrak{H}$ and claim that the existing solution is unique.
Thus, we have established  the existence and uniqueness of the solution  of  problem  \eqref{1}. Having known the root vectors of the operator  $A,$ applying  formula \eqref{2}, we can represent the obtained solution as a series. \\

\vspace{0.5cm}

\noindent{\bf Difference operator}\\

The approach implemented in studying the difference  operator is remarkable due to the appeared opportunity to set   the problem within the framework of the created theory,  having constructed a  suitable perturbation of the   operator composition.
    Consider a difference operator and its adjoint operator
$$
Jf(x)=c[f(x)-f(x-d)],\,J^{\ast}f(x)=c[f(x)-f(x+d)],\,f\in L_{2}(\Omega),\,\Omega=(-\infty,\infty),\;c,d>0.
$$
Let us find a representation for fractional powers of the operator $A.$ Using formula  (45) \cite{kukushkin2021a}, we get
$$
   J^{\beta}f=\sum\limits_{k=0}^{\infty}C_{k}f(x-kd), \,f\in L_{2}(\Omega),
   \,C_{k}= -\frac{\beta\Gamma(k -\beta)}{k!\Gamma(1 -\beta)}c^{\,\beta},\,\beta\in (0,1).
   $$
 We need the following theorem (see Theorem 5 \cite{kukushkin2021a}).\\

\noindent{\bf Theorem.}   {\it Assume that  $Q$ is a   closed operator acting in $L_{2}(\Omega),\,Q^{-1}\in \mathfrak{S}_{\!\infty}(L_{2}),$ the operator $N$ is strictly accretive, bounded, $\mathrm{R}(Q)\subset \mathrm{D}(N).$ Then
a perturbation
$$
L:= J^{\ast}\!a J+b J^{\beta}+ Q^{\ast}N Q ,\;a,b\in L_{\infty}(\Omega),
$$
   satisfies conditions  H1--H2, if  $\gamma_{N}>\sigma\|Q^{-1}\|^{2},$
where we put $\mathfrak{M}:=\mathrm{D}_{0}(Q),$
$$
 \sigma= 4c\|a\|_{L_{\infty}}+\|b\|_{L_{\infty}}\frac{\beta c^{\,\beta}   }{\Gamma(1 -\beta)}
 \sum\limits_{k=0}^{\infty}\frac{ \Gamma(k -\beta)}{k! }.
$$
}
Observe that by virtue of the made assumptions regarding   $Q,$ we have $\mathfrak{H}_{Q}\subset\subset L_{2}(\Omega).$ We have chosen  the space  $L_{2}(\Omega)$ as a space $\mathfrak{H}$ and the space  $\mathfrak{H}_{Q} $ as a space $\mathfrak{H}_{+}.$   Applying  the condition   $\mathrm{H}2,$   we get
$$
C (Q^{\ast}N Qf,f)_{\mathfrak{H}}\leq(Hf,f)_{\mathfrak{H}}\leq C (Q^{\ast}N Qf,f)_{\mathfrak{H}},\;f\in \mathrm{D}_{0}(Q),
$$
where $H$ is a real part of $\tilde{W}.$ Therefore, by virtue of the minimax principle, we get $\lambda_{j}(H)\asymp \lambda_{j}(Q^{\ast}N Q).$
Hence $\mu(H)=\mu(Q^{\ast}N Q).$
Thus, we have naturally  come to the significance  of the operator $Q$ and the remarkable fact   that we can fulfill the conditions of  Theorem  \ref{T4} chousing the operator $Q$ in the artificial way.
  Applying Theorem  \ref{T4}, we can claim that  there exists a solution of   problem \eqref{1},
where $W$ is a restriction of $L$ on the set $\mathfrak{M}$ (see introduction), functions $a,b$ are sufficiently smooth to guaranty  the fact  the right-hand side of \eqref{1} has a sense. The extension  of the initial conditions on the whole space $\mathfrak{H},$ as well as  solvability of the  uniqueness problem can be implemented in the case  when the operator $\tilde{W}^{n}$  is accretive. In its own turn, it is clear that the particular methods,   to establish the  accretive property, can differ and may depend on the concrete form of the operator $Q.$\\

\noindent{\bf   Artificially constructed normal operator }\\

In this paragraph we consider an operator class  which cannot be completely  studied by methods  \cite{firstab_lit:1Lidskii}, at the same time Theorem \ref{T3} gives us a rather relevant result. Our aim is to construct a normal operator $N$ being satisfied the Theorem \ref{T3} conditions, such that  $N\in \tilde{\mathfrak{S}}_{\alpha} \setminus \mathfrak{S}_{\alpha}$.  Let us consider the following example as a perquisite for the further reasonings.
\begin{ex}\label{E1} Here we would like to produce an example of the sequence   $\{\mu_{n} \}_{1}^{\infty}$  that satisfies the condition
 $
 (\ln^{\kappa+1}x)'_{\mu_{n} }  =o(  n^{-\kappa}   ),\,(0<\kappa<1),\,
$
and at the same time
$$
\sum\limits_{n=1}^{\infty}\frac{1}{|\mu_{n}|^{1/\kappa}}=\infty.
$$
Consider a sequence $\mu_{n}=n^{\kappa}\ln^{\kappa} n \cdot \ln^{\kappa}\ln n,$ then using the integral test for convergence we can easily see that the previous series   diverges. At the same time substituting, we get
$$
\frac{\ln^{\kappa}\mu_{n}}{\mu_{n}} \leq \frac{ C }{n^{\kappa}     \ln^{\kappa}\ln n},\;n=1,2,...\,,
$$
what gives us the fulfilment of the first condition.
\end{ex}
Consider the abstract separable Hilbert space $\mathfrak{H}$ and an operator $N$ acting in the space as follows
$$
Nf=\sum\limits_{n=1}^{\infty}\lambda_{n}f_{n}e_{n},\;f_{n}=(f,e_{n})_{\mathfrak{H}},\;
 \lambda_{n}=\mu_{n}+i \eta_{n}
$$
where $\{e_{n}\}_{1}^{\infty}\subset \mathfrak{H}$ is an orthonormal basis, the sequence  $\{\mu_{n}\}_{1}^{\infty}$  is defined in Example \ref{E1}, $|\eta_{n}|<M\mu_{n},\,M>0,\; n=1,2,...\,.$
Define the space $\mathfrak{H}_{+}$ as follows
$$
\mathfrak{H}_{+}:=\left\{f\in \mathfrak{H}:\;\|f\|^{2}_{\mathfrak{H}_{+}}:=\sum\limits_{n=1}^{\infty}|\lambda_{n}||f_{n}|^{2}<\infty\right\}.
$$
It is clear that $\mathfrak{H}_{+}$ is dense in $\mathfrak{H},$ since  $\{e_{n}\}_{1}^{\infty}\subset \mathfrak{H}_{+}.$ Let us show that embedding of the spaces $\mathfrak{H}_{+}\subset \mathfrak{H}$ is compact. Consider the operator $B: \mathfrak{H}\rightarrow \mathfrak{H}$ defined as follows
$$
Bf=\sum\limits_{n=1}^{\infty}|\lambda_{n}|^{-1/2}f_{n}e_{n}.
$$
Note that compactness of the operator $B$ can be proved easily due to the well-known  criterion of compactness in the Banach space endowed with a basis (we left the prove  to the reader).
Notice that if $f\in \mathfrak{H}_{+},$ then $g\in\mathfrak{H},$  where $g$ is defined by its fourier coefficients    $g_{n}=|\lambda_{n}|^{1/2}|f_{n}|.$ By virtue of such a correspondence we can consider any bounded set in the space $\mathfrak{H}_{+}$ as a bounded set in the space $\mathfrak{H}.$ Applying the operator $B$ to the element $g,$ we get the element $f.$ Due to the compactness of the operator $B$ we can conclude that the image of the bounded set in the sense of the norm $\mathfrak{H}_{+}$  is a compact set in the sense of the norm $\mathfrak{H}.$
Define the set $\mathfrak{M}$ as a linear manifold generated by  the basis vectors.  Thus, we have obtained the relation $\mathfrak{H}_{+}\subset\subset\mathfrak{H}$ and established the fulfilment of the hypotheses $\mathrm{H}1.$     The first relation of the hypotheses $\mathrm{H}2$ can be obtained  easily due to the application of the Cauchy-Schwarz inequality. To obtain the second one consider
$$
\mathrm{Re}(Nf,f)_{\mathfrak{H}}=\sum\limits_{n=1}^{\infty}\mathrm{Re} \lambda_{n} |f_{n}|^{2} \geq  \left(1+M^{2}\right)^{-1/2}  \sum\limits_{n=1}^{\infty}  |\lambda_{n}| |f_{n}|^{2}= \left(1+M^{2}\right)^{-1/2} \|f\|^{2}_{\mathfrak{H}_{+}}.
$$
Now, we  conclude that hypotheses $\mathrm{H}1, \mathrm{H}2$ hold. Consider a Cauchy problem
\begin{equation}\label{13}
\frac{du}{dt}=N^{1/\kappa}u,\;u(0)=h\in \mathrm{D}(N),\;\kappa= 1/2,1/3,...\,,
\end{equation}
where $h$ is supposed to be an  arbitrary element if the operator $N^{1/\kappa}$ is accretive.
In accordance with Theorem \ref{T4}, we conclude that there exists a solution of  problem \eqref{13} presented by the series \eqref{2}.
 Moreover, we claim that  under the assumption  $ M\leq \tan\{\pi \kappa/2 \},$ the existing solution is unique and we can extend the initial condition assuming that $h\in \mathfrak{H}.$ For this purpose, in accordance with Theorem \ref{T3}, let us prove that
$
\mathrm{Re}(N^{1/\kappa}f,f)_{\mathfrak{H}}\geq0.
$
 The latter fact  follows from
the relation
$$
\mathrm{Re }\lambda_{n}^{1/\kappa}=|\lambda_{n}|^{1/\kappa} \cos   \left(\frac{\mathrm{arg}\lambda_{n}}{\kappa}   \right) \geq |\lambda|^{1/\kappa} \cos   \frac{\pi}{2}\,,
$$
and the representation
$$
\mathrm{Re}(N^{1/\kappa}f,f)_{\mathfrak{H}}= \sum\limits_{n=1}^{\infty}\mathrm{Re}\, \lambda^{1/\kappa}_{n} |f_{n}|^{2}.
$$
Thus, we obtain the desired result. Note that the constructed normal operator indicates the significance  of the made  in Theorem \ref{T3} clarification  of the results \cite{firstab_lit:1Lidskii}. However, we produce one more relevant application of the mentioned  theorem in the following paragraph.  \\

\noindent{\bf  Evolution equations with the fractional derivative at the left-hand side}\\

   In this paragraph, we still consider   a Hilbert space $\mathfrak{H}$ consists of   element-functions $u:\mathbb{R}_{+}\rightarrow \mathfrak{H},\,u:=u(t),\,t\geq0$    assuming  that if $u$ belongs to $\mathfrak{H}$    then the fact  holds for all values of the variable $t.$   We understand such operations as differentiation and integration in the generalized sense that is caused by the topology of the Hilbert space $\mathfrak{H}.$ The derivative is understood as a    limit
$$
  \frac{u(t+\Delta t)-u(t)}{\Delta t}\stackrel{\mathfrak{H}}{ \longrightarrow}\frac{du}{dt} ,\,\Delta t\rightarrow 0.
$$
Let $t\in  I:=[a,b],\,0< a <b<\infty.$ The following integral is understood in the Riemann  sense as a limit of partial sums
\begin{equation*}
\sum\limits_{i=0}^{n}u(\xi_{i})\Delta t_{i}  \stackrel{\mathfrak{H}}{ \longrightarrow}  \int\limits_{ I }u(t)dt,\,\zeta\rightarrow 0,
\end{equation*}
where $(a=t_{0}<t_{1}<...<t_{n}=b)$ is an arbitrary splitting of the segment $ I ,\;\zeta:=\max\limits_{i}(t_{i+1}-t_{i}),\;\xi_{i}$ is an arbitrary point belonging to $[t_{i},t_{i+1}].$
The sufficient condition of the last integral existence is a continuous property (see\cite[p.248]{firstab_lit:Krasnoselskii M.A.}) i.e.
$
u(t)\stackrel{\mathfrak{H}}{ \longrightarrow}u(t_{0}),\,t\rightarrow t_{0},\;\forall t_{0}\in  I.
$
The improper integral is understood as a limit
\begin{equation*}
 \int\limits_{a}^{b}u(t)dt\stackrel{\mathfrak{H}}{ \longrightarrow} \int\limits_{a}^{c}u(t)dt,\,b\rightarrow c,\,c\in  [0,\infty].
\end{equation*}
Combining the operations we can consider a generalized  fractional derivative
  in the Riemann-Liouville sense (see \cite{firstab_lit:samko1987}),     in the formal form, we have
$$
   \mathfrak{D}^{1/\alpha}_{-}f(t):=-\frac{1}{\Gamma(1-1/\alpha)}\frac{d}{d t}\int\limits_{0}^{\infty}f(t+x)x^{-1/\alpha}dx,\;\alpha>1.
$$
Let us study   a Cauchy problem
\begin{equation}\label{14}
   \mathfrak{D}^{1/\alpha}_{-}  u=\tilde{W}u ,\;u(0)=h\in \mathrm{D}(\tilde{W}),
\end{equation}
where in the case  when the operator composition $\mathfrak{D}^{1-1/\alpha}_{-}\tilde{W}$ is   accretive,  we assume  that   $h\in \mathfrak{H}.$
 \begin{teo}\label{T5}
Assume that  the Theorem \ref{T3} conditions  hold,   then
there exists a solution of the Cauchy problem \eqref{14} in the form
\begin{equation}\label{15}
u(t)= \frac{1}{2\pi i}\int\limits_{\gamma}e^{-\lambda^{\alpha}t}A(I-\lambda A)^{-1}h d \lambda
=  \sum\limits_{\nu=0}^{\infty}  \sum\limits_{q=N_{\nu}+1}^{N_{\nu+1}}\sum\limits_{\xi=1}^{m(q)}\sum\limits_{i=0}^{k(q_{\xi})}e_{q_{\xi}+i}c_{q_{\xi}+i}(t),
\end{equation}
where
\begin{equation*}
  \sum\limits_{\nu=0}^{\infty}\left\|\sum\limits_{q=N_{\nu}+1}^{N_{\nu+1}}\sum\limits_{\xi=1}^{m(q)}\sum\limits_{i=0}^{k(q_{\xi})}e_{q_{\xi}+i}c_{q_{\xi}+i}(t)\right\|_{\mathfrak{H}}<\infty,
\end{equation*}
a sequence of natural numbers $\{N_{\nu}\}_{0}^{\infty}$ can be chosen in accordance with the claim of    Theorem \ref{T3}.
Moreover, the existing solution is unique if the operator composition  $\mathfrak{D}^{1-1/\alpha}_{-}\tilde{W}$ is accretive.
\end{teo}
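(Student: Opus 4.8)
The plan is to follow the scheme of the proof of Theorem~\ref{T4} almost verbatim, the only new ingredient being the action of $\mathfrak{D}^{1/\alpha}_{-}$ on the kernel $e^{-\lambda^{\alpha}t}$. First I would check, exactly as in Theorem~\ref{T4} and with Lemma~6 \cite{firstab_lit:1kukushkin2021}, that $\frac{1}{2\pi i}\int_{\gamma}e^{-\lambda^{\alpha}t}(E-\lambda A)^{-1}h\,d\lambda$ converges in $\mathfrak{H}$ for every $h\in\mathfrak{H}$; since $A$ is bounded and $\tilde{W}A=E$, this gives at once $u(t)\in\mathrm{D}(\tilde{W})$ together with $\tilde{W}u(t)=\frac{1}{2\pi i}\int_{\gamma}e^{-\lambda^{\alpha}t}(E-\lambda A)^{-1}h\,d\lambda$. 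All the improper integrals involved --- including the Riemann--Liouville integral in $x\in(0,\infty)$, where the singularity $x^{-1/\alpha}$ is integrable because $1/\alpha<1$ --- are well defined thanks to the decay of $e^{-\lambda^{\alpha}t}$ along $\gamma$, which is guaranteed by the choice of the contour and by $\alpha>1$.

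The central computation is the elementary identity $\mathfrak{D}^{1/\alpha}_{-}e^{-\mu t}=\mu^{1/\alpha}e^{-\mu t}$, valid for $\mathrm{Re}\,\mu>0$: one has $\int_{0}^{\infty}e^{-\mu(t+x)}x^{-1/\alpha}dx=\Gamma(1-1/\alpha)\,\mu^{-(1-1/\alpha)}e^{-\mu t}$ by the Euler integral, and differentiation in $t$ yields the claim. With $\mu=\lambda^{\alpha}$ (principal branch, so that $(\lambda^{\alpha})^{1/\alpha}=\lambda$ near the positive real axis) this becomes $\mathfrak{D}^{1/\alpha}_{-}e^{-\lambda^{\alpha}t}=\lambda e^{-\lambda^{\alpha}t}$. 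Carrying $\mathfrak{D}^{1/\alpha}_{-}$ under the contour integral --- legitimate by the same uniform exponential bounds used above, together with a Fubini interchange for the $x$-integral --- gives $\mathfrak{D}^{1/\alpha}_{-}u(t)=\frac{1}{2\pi i}\int_{\gamma}\lambda e^{-\lambda^{\alpha}t}A(E-\lambda A)^{-1}h\,d\lambda$. Substituting $\lambda A(E-\lambda A)^{-1}=(E-\lambda A)^{-1}-E$ and using, exactly as in the proof of Theorem~\ref{T4}, that $\frac{1}{2\pi i}\int_{\gamma}e^{-\lambda^{\alpha}t}h\,d\lambda=0$ because the integrand is analytic inside $\gamma$ and decays fast enough on the large arcs, we obtain $\mathfrak{D}^{1/\alpha}_{-}u(t)=\frac{1}{2\pi i}\int_{\gamma}e^{-\lambda^{\alpha}t}(E-\lambda A)^{-1}h\,d\lambda=\tilde{W}u(t)$, so \eqref{14} holds.

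For the initial condition, when $h\in\mathrm{D}(\tilde{W})$ it suffices to invoke Lemma~7 \cite{firstab_lit:1kukushkin2021}, which gives $u(t)\xrightarrow{\mathfrak{H}}h$ as $t\rightarrow+0$. To reach an arbitrary $h\in\mathfrak{H}$, and to obtain uniqueness, I would apply $\mathfrak{D}^{1-1/\alpha}_{-}$ to \eqref{14}: since $\mathfrak{D}^{1-1/\alpha}_{-}e^{-\mu t}=\mu^{1-1/\alpha}e^{-\mu t}$, the composition $\mathfrak{D}^{1-1/\alpha}_{-}\mathfrak{D}^{1/\alpha}_{-}$ acts on the kernel as multiplication by $\lambda^{\alpha}$, that is, as $-d/dt$, so the problem reduces to $du/dt=-\mathfrak{D}^{1-1/\alpha}_{-}\tilde{W}u$, which is precisely the setting of Theorem~\ref{T4}. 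Taking the inner product with $u$ and the real part gives $\big(\|u(t)\|^{2}_{\mathfrak{H}}\big)'_{t}=-2\mathrm{Re}(\mathfrak{D}^{1-1/\alpha}_{-}\tilde{W}u,u)_{\mathfrak{H}}\leq0$ whenever $\mathfrak{D}^{1-1/\alpha}_{-}\tilde{W}$ is accretive; integrating yields $\|S_{t}h\|_{\mathfrak{H}}\leq\|h\|_{\mathfrak{H}}$ on $\mathrm{D}(\tilde{W})$, hence $\|S_{t}\|_{\mathfrak{H}\rightarrow\mathfrak{H}}\leq1$ by density, and the $\varepsilon/3$ approximation argument of Theorem~\ref{T4} lets us set $u(0)=h$ for every $h\in\mathfrak{H}$. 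The same energy inequality applied to the difference $\phi=u_{1}-u_{2}$, for which $\phi(0)=0$, forces $\phi\equiv0$; this is the uniqueness. Finally, the series representation \eqref{15} and the bound on $\sum_{\nu}\|\cdot\|_{\mathfrak{H}}$ are nothing but the conclusion of Theorem~\ref{T3} read off for this $\alpha$.

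I expect the delicate point to be the rigorous justification that $\mathfrak{D}^{1/\alpha}_{-}$, $\mathfrak{D}^{1-1/\alpha}_{-}$ and $d/dt$ may be moved inside the contour integral and that the resulting integrals converge in $\mathfrak{H}$: this requires uniform control of $|\lambda e^{-\lambda^{\alpha}t}|$ and $|\lambda^{\alpha-1}e^{-\lambda^{\alpha}t}|$ on the tails of $\gamma$ together with $\|(E-\lambda A)^{-1}h\|_{\mathfrak{H}}\leq C\|h\|_{\mathfrak{H}}$ from Lemma~6, plus a Fubini interchange for the improper $x$-integral defining $\mathfrak{D}^{1/\alpha}_{-}$; once these are in place the algebra is immediate. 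A secondary issue worth a line is verifying that $\mathrm{Re}\,\lambda^{\alpha}$ stays bounded below along $\gamma$ so that $e^{-\lambda^{\alpha}t}$ is genuinely decaying in $t$, which is exactly where $\alpha>1$ and the specific shape of the contour $\gamma$ enter.
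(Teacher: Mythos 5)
Your proposal is correct and follows essentially the same route as the paper: the Euler-integral identity $\int_{0}^{\infty}x^{-1/\alpha}e^{-\lambda^{\alpha}x}dx=\Gamma(1-1/\alpha)\lambda^{1-\alpha}$ combined with a Fubini interchange over the contour (the paper's formula for swapping the $x$- and $\lambda$-integrals), the resolvent substitution $\lambda A(E-\lambda A)^{-1}=(E-\lambda A)^{-1}-E$ with the second integral vanishing by analyticity, the composition identity $\mathfrak{D}^{1-1/\alpha}_{-}\mathfrak{D}^{1/\alpha}_{-}u=-u'$ to reduce to the first-order energy estimate, and the same density plus $\varepsilon/3$ argument for the initial condition and uniqueness. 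The delicate points you flag (uniform decay of $e^{-\lambda^{\alpha}t}$ along $\gamma$ and the interchange of limits) are exactly the ones the paper delegates to Lemma 6 of \cite{firstab_lit:1kukushkin2021}.
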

\begin{proof}
  Let us find a solution of problem \eqref{14} in the form \eqref{15}.
  Analogously to the reasonings of Theorem \ref{T4},
using Lemma 6 \cite{firstab_lit:1kukushkin2021}, it is not hard to prove  that the following  integrals exist  i.e.
\begin{equation}\label{17}
 \frac{1}{2\pi i} \int\limits_{\gamma}e^{-\lambda^{\alpha}t} (E-\lambda A)^{-1}h d\lambda  \in \mathfrak{H};\;\frac{d u}{d t}=-\frac{1}{2\pi i} \int\limits_{\gamma}e^{-\lambda^{\alpha}t}  \lambda^{\alpha } A(E-\lambda A)^{-1}h \,d\lambda \in \mathfrak{H}.
\end{equation}
Note that the first relation gives us the fact
   $u(t)\in \mathrm{D}(\tilde{W}).$
 Applying  the scheme of the proof corresponding to the  ordinary integral calculus,  using the contour $\gamma_{k}$ (see reasonings of Theorem \ref{T4}),   applying  Lemma   6  \cite{firstab_lit:1kukushkin2021}, we can establish a  formula
\begin{equation}\label{18}
\int\limits_{0}^{\infty}x^{-\sigma}dx\int\limits_{\gamma}e^{-\lambda^{\zeta}(t+x)}\lambda^{m}A(E-\lambda A)^{-1}h d\lambda=\int\limits_{\gamma}e^{-\lambda^{\zeta}t}\lambda^{m}A(E-\lambda A)^{-1}h d\lambda\int\limits_{0}^{\infty}x^{-\sigma}e^{-\lambda^{\zeta}x}dx,
\end{equation}
where $\sigma\in(0,1),\,\zeta>0,\,m\in \mathbb{N}_{0}.$
In the same way, we get
$$
\frac{d}{dt}\int\limits_{\gamma}\lambda^{1-\alpha}e^{-\lambda^{\alpha}t}A(E-\lambda A)^{-1}h d\lambda =
-\int\limits_{\gamma}\lambda e^{-\lambda^{\alpha}t}A(E-\lambda A)^{-1}h d\lambda.
$$
Applying  the obtained  formulas, taking into account a relation
$$
  \int\limits_{0}^{\infty}x^{-1/\alpha}e^{-\lambda^{\alpha}x}dx=  \Gamma(1-1/\alpha) \lambda^{1-\alpha},
$$
we get
\begin{equation}\label{19}
\mathfrak{D}^{1/\alpha}_{-}u=\frac{1}{2\pi i} \int\limits_{\gamma}e^{-\lambda^{\alpha}t}  \lambda  A(E-\lambda A)^{-1}h d\lambda.
\end{equation}
Making the substitution using the formula  $\lambda A(E-\lambda A)^{-1}=(E-\lambda A)^{-1}-E,$     we obtain
$$
  \mathfrak{D}^{1/\alpha}_{-}u= \frac{1}{2\pi i} \int\limits_{\gamma}e^{-\lambda^{\alpha}t}   (E-\lambda A)^{-1}h\, d\lambda-\frac{1}{2\pi i} \int\limits_{\gamma}e^{-\lambda^{\alpha}t}   h \,d\lambda=I_{1}+I_{2}.
$$
The second integral equals zero by virtue  of  the fact  that the   function under the integral is analytical inside the intersection of the  domain $G$  with the circle of  an arbitrary radius $R$   and it   decreases sufficiently fast on the arch of the radius $R,$ when $R\rightarrow\infty,$  here we denote  by $G$ the interior of the contour $\gamma.$  Now, if we consider the    expression for $u,$ we obtain the fact that $u$ is a solution of the equation i.e.
$
\mathfrak{D}^{1/\alpha}_{-}u=\tilde{W }u.
$
 We obtain  the decomposition   on  the series  of the root vectors \eqref{15}   due to  Theorem   \ref{T3}.
 Let us show that the initial condition holds in the sense
$
u(t)   \xrightarrow[   ]{\mathfrak{H}}  h,\,t\rightarrow+0.
$
It becomes clear if $h\in \mathrm{D}(\tilde{W})$ for in this case  it suffices to apply Lemma 7 \cite{firstab_lit:1kukushkin2021},     what  gives  us the desired result. To establish the fact in  the case   $h\in \mathfrak{H},$ we should   involve the accretive property of the operator composition  $ \mathfrak{D}^{1-1/\alpha}_{-}\tilde{W}  .$
Following the scheme of the Theorem \ref{T4} reasonings, consider an operator
$$
S_{t}h=\frac{1}{2\pi i} \int\limits_{\gamma}e^{-\lambda^{\alpha}t}A(E-\lambda A)^{-1}h\, d\lambda,\,t>0.
$$
Using the first relation \eqref{17}, the fact $A\in \mathcal{B}(\mathfrak{H}),$ we can easily prove that   $S_{t}:\mathfrak{H}\rightarrow \mathfrak{H}.$
Let us prove  that
$
\|S_{t}\|_{\mathfrak{H}\rightarrow \mathfrak{H}}\leq1,\;t>0.
$
We need to establish the following  formula
\begin{equation}\label{20}
\mathfrak{D}^{1-1/\alpha}_{-}\mathfrak{D}^{1/\alpha}_{-}u=-u'.
\end{equation}
Using relation \eqref{18}, we get
\begin{equation*}
\int\limits_{0}^{\infty}x^{1/\alpha-1}dx\int\limits_{\gamma}e^{-\lambda^{\alpha}(t+x)}\lambda A(E-\lambda A)^{-1}h d\lambda=\int\limits_{\gamma}e^{-\lambda^{\alpha}t}\lambda A(E-\lambda A)^{-1}h d\lambda\int\limits_{0}^{\infty}x^{1/\alpha-1}e^{-\lambda^{\alpha}x}dx=
\end{equation*}
$$
=\Gamma(1/\alpha)\int\limits_{\gamma}e^{-\lambda^{\alpha}t}  A(E-\lambda A)^{-1}h d\lambda = 2\pi i \Gamma(1/\alpha) u(t).
$$
Substituting  \eqref{19} to the first term of the latter  relation, using the given above definition of the generalized fractional derivative, we obtain \eqref{20}. Now, assume that   $h\in \mathrm{D}(\tilde{W}).$
Applying  the operator $\mathfrak{D}^{1-1/\alpha}_{-}$ to   both sides of relation \eqref{14}, using   formula \eqref{20},
we get
$
u'+\mathfrak{D}^{1-1/\alpha}_{-}\tilde{W}u=0.
$
 Let us multiply the both sides of the latter  relation  on $u$  in the sense of the inner product, we get
$
\left(u'_{t},u\right)_{\mathfrak{H}}+(\mathfrak{D}^{1-1/\alpha}_{-}\tilde{W}u,u)_{\mathfrak{H}}=0.
$
Consider a real part of the latter relation, we have
$
\mathrm{Re}\left(u'_{t},u\right)_{\mathfrak{H}}+\mathrm{Re}(\mathfrak{D}^{1-1/\alpha}_{-}\tilde{W}u,u)_{\mathfrak{H}}= \left(u'_{t},u\right)_{\mathfrak{H}}/2+ \left(u, u'_{t}\right)_{\mathfrak{H}}/2+\mathrm{Re}(\mathfrak{D}^{1-1/\alpha}_{-}\tilde{W}u,u)_{\mathfrak{H}}.
$
Therefore
$
   \left(\|u(t)\|_{\mathfrak{H}}^{2}\right)'_{t}  =-2\mathrm{Re}(\mathfrak{D}^{1-1/\alpha}_{-}\tilde{W}u,u)_{\mathfrak{H}}\leq 0.
$
Integrating both sides, we get
$$
  \|u(\tau)\|_{\mathfrak{H}}^{2}-  \|u(0)\|_{\mathfrak{H}}^{2}=\int\limits_{0}^{\tau} \frac{d }{dt}\|u(t)\|_{\mathfrak{H}}^{2} dt\leq 0.
$$
The last relation can be rewritten in the form
$
\|S_{t}h\|_{\mathfrak{H}}\leq \|h\|_{\mathfrak{H}},\,h\in \mathrm{D}(\tilde{W}).
$
Since $\mathrm{D}(\tilde{W})$ is a dense set in $ \mathfrak{H},$ then we obviously  obtain  the  desired result i.e. $\|S_{t}\|_{\mathfrak{H}\rightarrow \mathfrak{H}}\leq 1.$  The rest part of the proof is absolutely analogous to the proof of Theorem \ref{T4}.   The decomposition  on  the series  of the root vectors \eqref{15} is given  by virtue  of Theorem   \ref{T3}.
 The uniqueness follows  from the accretive property of the  operator composition  $\mathfrak{D}^{1-1/\alpha}_{-}\tilde{W}.$
\end{proof}

\begin{remark}
Eventually, we want to note  that  Theorem \ref{T5} is applicable to the   operators studied  in the previous paragraphs. The question that may appear is related to the description of the operator class for which the considered above operator  composition is accretive.
\end{remark}

\newpage

\section{Conclusions}

In this paper we studied the Cauchy problems for the evolution equation in the abstract Hilbert space. The made approach allows us to obtain a solution  analytically  for the right-hand side belonging to a sufficiently wide class of operators. In this regard  such operators as the Riemann-Liouville fractional differential operator, the Kipriyanov operator, the Riesz potential, the difference operator have been involved. Moreover, we produced the artificially constructed normal operator for which the clarification of the Lidskii V.B. results  relevantly works. It is remarkable that Theorem \ref{T5} covers many previously obtained results in the framework of the theory of factional differential equations. However, the main advantage is the obtained abstract formula for the solution. Moreover,   the norm-convergence of the series representing the solution allows us to apply the methods of the approximation theory. It is also worth noticing that Theorem \ref{T5} jointly with Theorem \ref{T3} give  us the opportunity to minimize conditions imposed on the fractional order at the left-hand side of the equation  and   the artificially constructed normal operator provides the relevance and significance of such an achievement. Here, we should admit that  the analogs of  Theorem \ref{T5} can be obtained by  Lidskii V.B. methods \cite{firstab_lit:1Lidskii}, but the used  in the paper \cite{firstab_lit:1kukushkin2021}  variant of the natural  lacunae method   allows us to formulate the optimal conditions in comparison with the Lidskii V.B. results.

The interesting  problem  that may appear is how to consider the evolution equations of the arbitrary  real order.
  We should note that the applied technique can be relevant in the case of the integer  order higher than one for   the operator function   under the integral presenting the solution is sufficiently "good" what allows us to differentiate it. The latter opportunity becomes more valuable  due to the appeared formula connecting the derivative of the integer order and  the corresponding power of the operator. This creates a prerequisite to consider a generalized  Taylor series and raise a question on convergence at least and representing the solution at most. It is remarkable that the supposed  correspondence between derivatives and  the operator powers leads us to the study of   series of   unbounded operators. Apparently,  the hypotheses on the representing the solution by the  Taylor series is equivalent to existing   the fixed point of the mentioned operator series.
 We hope that  this  idea creates a prerequisite for further study in the direction.

\end{document}